\documentclass{article}
\usepackage{graphicx,algorithm,hyperref} 
\usepackage[a4paper]{geometry}

\usepackage{amscd}
\usepackage{amsfonts,dsfont}
\usepackage{amsmath}
\usepackage{amssymb}
\usepackage{mathrsfs}
\usepackage{mathtools}
\usepackage{amsthm}
\usepackage{todonotes}

\usepackage{booktabs}
\usepackage{multirow}
\usepackage{adjustbox}
\usepackage{pdflscape}
\usepackage{array}
\usepackage{adjustbox}
\usepackage[bottom]{footmisc}

\usepackage{bm}
\usepackage{float}

\usepackage{algpseudocode}
\newtheorem{lemma}{Lemma}
\newtheorem{theorem}[lemma]{Theorem}
\newtheorem{definition}[lemma]{Definition}
\newtheorem{remark}[lemma]{Remark}

\usepackage{color}
\newcommand{\korange}{\textcolor[rgb]{1,0.5,0}} %
\usepackage[]{todonotes}

\newcommand*{\N}{\mathbb{N}} 
\newcommand*{\R}{\mathbb{R}}
\DeclareMathOperator*{\av}{average}
\newcommand*{\gk}{g_k^s}
\newcommand*{\nk}{n_k^s}
\title{Objective-Function Free Multi-Objective Optimization: \\Rate of Convergence and Performance of \\ an Adagrad-like algorithm \footnote{During the submission of this manuscript, we learned of independent work by Gon\c{c}alves, Grapiglia and Melo \cite{Grapiglia} posted on Optimization Online on January 30th, 2026, which proposes a similar Adagrad-like algorithm for multi-objective optimization. Nevertheless, the theoretical analysis of the method  and the numerical experience are different.}}
\author{Marianna~De~Santis~~~Gabriele Eichfelder~~~Margherita Porcelli}

\author{Marianna De Santis\footnote{Dipartimento di Ingegneria dell'Informazione, Universit\`a degli Studi di Firenze, Via di Santa Marta 3, 50139 Florence, Italy and member of  the SIMAI-OPTIMA Group ({\tt marianna.desantis@unifi.it}),~ORCID 0000-0002-1189-5917.}\addtocounter{footnote}{5}
\and Gabriele Eichfelder\footnote{Institute of Mathematics, Technische Universit\"{a}t Ilmenau, Po 10 05 65, D-98684 Ilmenau, Germany ({\tt gabriele.eichfelder@tu-ilmenau.de}), ORCID 0000-0002-1938-6316.} 
\and Margherita Porcelli\footnote{Dipartimento di Ingegneria Industriale, Universit\`a degli Studi di Firenze, Viale Morgagni 40/44, 50134 Firenze, Italy,  ISTI--CNR, Italy and member of the INdAM Research Group GNCS  and of the SIMAI-OPTIMA Group ({\tt margherita.porcelli@unifi.it}) ORCID 0000-0003-0183-1204.}}

\date{\today}

\begin{document}
\maketitle

\begin{abstract}

We propose an Adagrad-like algorithm for multi-objective unconstrained  optimization that relies on the computation of a common descent direction only. Unlike classical local algorithms for multi-objective optimization, our approach does not rely on the dominance property to accept new iterates, which allows for a flexible and function-free optimization framework. New points are obtained using an adaptive stepsize that does not require neither knowledge of Lipschitz constants nor the use of line search procedures. 
The rate of convergence is analyzed and is shown to be $\mathcal{O}(1 / \sqrt{ k+1})$ with respect to the norm of the common descent direction. The method is extensively validated on a broad class of unconstrained multi-objective problems and simple multi-task learning instances, and compared against a first-order line search algorithm. Additionally, we present a preliminary study of the behavior under noisy multi-objective settings, highlighting the robustness of the method.

\end{abstract}

\par\smallskip\noindent
\textbf{Keywords.} Multi-objective optimization, Objective-function-free optimization, Adagrad-like algorithm.

\section{Introduction}
Recently, a class  of algorithms for unconstrained nonconvex optimization has been proposed where  the value of the (single) objective function is never computed. This class, referred to as Objective-Function-Free Optimization (OFFO) algorithms, has recently been very popular in the context of noisy problems, in particular in deep learning applications (see~\cite{duchi2011adaptive,kingma2014adam},  among many others), where they have shown remarkable insensitivity to the noise level. Also, in these applications evaluating the loss function often requires looking at millions of data points, making its repeated computation per iteration often prohibitive. This context is particularly suitable for the OFFO framework.
 OFFO contains several well-known algorithms such as Adagrad~\cite{duchi2011adaptive,mcmahan2010adaptive}, Adam~\cite{kingma2014adam}, RMSprop~\cite{tieleman2012lecture}, ADADELTA~\cite{zeiler2012adadelta} which have been proposed and widely used in the machine learning community, emerging as state-of-the-art techniques to train neural networks. All these methods only rely on current and past gradient information to adaptively determine the step size at each iteration. The OFFO algorithms  have been introduced to unify and to extend the complexity and convergence theory of many algorithms for nonconvex problems by using a trust-region framework~\cite{gratton2024complexity,GrapigliaStella}. Indeed, the work~\cite{gratton2024complexity}  re-interpreted the deterministic Adagrad as a general trust-region method in infinity norm where, unlike standard methods, no objective is evaluated to measure the progress towards a minimizer and to enforce descent.  These methods have been extended to several classes of problems including constrained optimization~\cite{bellavia2025fast,bellavia2026,gratton2024bounds}, multilevel optimization~\cite{gratton2023multilevel}, and neural network pruning applications~\cite{prunAdag}, including both deterministic and stochastic approaches.

In this work, we introduce the further challenge of optimizing multiple objective functions without evaluating any element function. This framework is particularly motivated by multi-task learning in neural networks, where a single model must simultaneously optimize multiple task-specific losses, see, e.g.,~\cite{Multi-Task,StochMO24Vicente}.  
More broadly, stochastic multi-objective optimization problems - characterized by the presence of noise and multiple competing objectives - arise in several applications, including finance, energy, transportation, logistics, and supply chain management, see, for instance, the survey~\cite{StochMO2016survey} and the references therein.  
 \footnote{Although noisy problems are a relevant motivation for our proposal, we consider here deterministic (noiseless) algorithms which are crucial to understand the behavior of stochastic ones, and provide some numerical experience on noisy multi-objective problems.} 

The majority of local algorithms for multi-objective optimization uses the fact that if a point does not satisfy suitable necessary optimality conditions, then it can be easily improved, in the sense that a new point can be easily defined, with respect to a specific quality measure, which is usually the objective functions value. One aims at a new point which improves the value of at least one objective function (sometimes also all objective functions) and which is thus not dominated by the previous one.
Most of the algorithms proposed in this respect, both for unconstrained and constrained multi-objective problems, extend the classical iterative scalar optimization algorithms, such as the steepest descent~\cite{Fliege00}, Newton~\cite{Fliege09,gonccalves2022globally}, external penalty~\cite{fukuda2016external} or interior point~\cite{fukuda2020barrier}, just to name a few. The mentioned approaches produce sequences of points able to converge to single Pareto critical points. In particular, at every iteration, these algorithms look for a new improved point, that is, a point that  dominates the previous one. Designing an algorithm that does not rely on the dominance property to accept new points is then a particular challenge in the context of multi-objective optimization.

Our framework is inspired by the OFFO work~\cite{gratton2024complexity} by leveraging on the Adagrad-Norm algorithm~\cite{Adagrad-Norm}  for deterministic non-convex optimization that relies on $\ell_2$-norm trust-region subproblems. 
Our contribution can be summarized as follows:
\begin{enumerate} 
    \item  We develop an adaptive strategy that uses a first-order common descent direction for which we provide a thorough theoretical characterization. The adaptive stepsize rule relies on the current and past common descent directions.

    \item No knowledge of the Lipschitz constants neither the employment of a line search procedure (cfr~\cite{StochMO24Vicente}) is needed to define the adaptive stepsize.

    \item We prove the convergence of the method with global convergence rate\footnote{Recall that given two sequences $\{\alpha_k\}$ and $\{\beta_k\}$ of nonnegative and positive 
     scalars, respectively,  we  say that $\alpha_k$ is $\mathcal{O}(\beta_k)$ if there exists a finite constant $\kappa\in\R$ such that $\lim_{k\rightarrow\infty} (\alpha_k/\beta_k) \leq \kappa$.} of $\mathcal{O}(1 / \sqrt{ k+1})$ with respect to the norm of the common descent direction. 
    
    \item We extensively validate our method on a large class of unconstrained multi-objective problems and on simple multi-task problems, in comparison with a first-order line search algorithm. 

    \item We provide a preliminary study on the behavior of both our method and the line search based one on noisy multi-objective problems.
\end{enumerate}\medskip

The paper is organized as follows. In Section \ref{sec:prel}, we review basic results on multi-objective optimization. Section~\ref{sec:MO-Adagrad} presents the \texttt{MO-Adagrad} algorithm and its theoretical analysis, including results on the common descent direction and a global convergence rate of $\mathcal{O}(1 / \sqrt{k+1})$. In Section \ref{sec:num}, we report our numerical experience. Finally, Section~\ref{sec:concl} concludes the paper.

\section{Basics of Multi-objective Optimization}\label{sec:prel}

We consider the unconstrained multi-objective optimization problem
\begin{equation}\label{P}\tag{MOP}
\min_{x\in\R^n}f(x)
\end{equation}
with $f\colon\R^n\to\R^m$, and each $f_j\colon\R^n\to\R$, $j\in\{1,\ldots,m\}$ being continuously differentiable. We use for any $m\in\N$ the set of indices $[m]:=\{1,\ldots,m\}$. We denote the gradient of one of the functions $f_j$ in some point $x\in\R^n$ by  $\nabla f_j(x)$.
Moreover, we assume that the multi-objective optimization problem \eqref{P} has at least one weakly efficient point. Recall that a   point $\bar x\in\R^n$  is called efficient for  \eqref{P} if for any $x\in \R^n$ the conditions  $f_j(x)\leq f_j(\bar x)$ for all $j\in[m]$ already imply $f(x)=f(\bar x)$, and it is called weakly efficient in case there is no $x\in \R^n$ with $f_j(x)< f_j(\bar x)$ for all $j\in[m]$. For vectors $y^1,y^2\in\R^m$ the inequalities $y^1\leq y^2$ and $y^1<y^2$ are understood componentwise.

In general, the task is to find efficient solutions for \eqref{P}. Weak efficiency is a weaker notion, and we have that any efficient point for \eqref{P} is also weakly efficient for \eqref{P}. Moreover, analogously to single-objective optimization, one can define local concepts, like locally weakly efficient and locally efficient points: a   point $\bar x\in\R^n$  is called locally efficient for  \eqref{P} if there exists a neighborhood $\mathcal{N}(\bar x)$ of $\bar x$ such that for any $x\in \mathcal{N}(\bar x)$ the condition  $f(x)\leq f(\bar x)$  already implies $f(x)=f(\bar x)$, and it is called locally weakly efficient in case  there exists a neighborhood $\mathcal{N}(\bar x)$ of $\bar x$ such that there is no $x\in \mathcal{N}(\bar x)$ with $f(x)< f(\bar x)$.

Additionally, we assume, as it is often done in the literature for descent-type algorithms for multi-objective optimization, that the gradients of the functions $f_j$, $j\in[m]$ are Lipschitz continuous, i.e., for each $j\in[m]$ there exists a Lipschitz constant $L_j>0$ with 
\[
\|\nabla f_j(x)-\nabla f_j(x')\|_2\leq L_j\,\|x-x'\|_2\ \mbox{ for all } x,x'\in\R^n.
\]
We will make use of 
\begin{equation}\label{eq:Lmax}
L_{\max} := \max_{j\in [m]} L_j,
\end{equation}
which denotes the maximum among the Lipschitz constants of the gradients $\nabla f_j$, $j\in [m]$.

In our proofs, the function $\Phi\colon\R^n\to\R$ will play an important role, which we define by  
\begin{equation}\label{eq:Phi}
\Phi(x) :=\displaystyle \max_{j\in [m]} f_j(x).
\end{equation}
This function is often used in the context of multi-objective trust-region methods \cite{Villacorta}. There,  for accepting a candidate point as a  new iteration point one requires a sufficient decrease for this function $\Phi$. This is a weaker criterion compared to requiring  that a sufficient decrease is reached in each objective function $f_j$. 
The function $\Phi$ is bounded from below under our assumption:

\begin{lemma}\label{lemma:philow}
Under our assumption that \eqref{P} has a weakly efficient point $\bar x \in\R^n$ there exists $\Phi_{\rm low}\in \R$ such that 
\[\inf_{x\in\R^n}\Phi(x)\geq \Phi_{\rm low}>-\infty.\]
\end{lemma}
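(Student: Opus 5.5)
The plan is to use the weakly efficient point $\bar x$ directly and take $\Phi_{\rm low} := \min_{j\in[m]} f_j(\bar x)$. This is a finite real number, since each $f_j$ is real-valued.

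The key step is to show that $\Phi(x)\geq \Phi_{\rm low}$ for every $x\in\R^n$. We argue by contradiction. Suppose some $x\in\R^n$ satisfies $\Phi(x)<\min_{j\in[m]} f_j(\bar x)$. Then for every $i\in[m]$,
\[
f_i(x)\leq \max_{j\in[m]} f_j(x)=\Phi(x)<\min_{j\in[m]} f_j(\bar x)\leq f_i(\bar x).
\]
Hence $f(x)<f(\bar x)$ componentwise. By the definition of weak efficiency recalled above, no such $x$ can exist, which is a contradiction. Therefore $\Phi(x)\geq \Phi_{\rm low}$ for all $x$, and taking the infimum gives $\inf_{x\in\R^n}\Phi(x)\geq\Phi_{\rm low}>-\infty$.

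There is no real obstacle here. The only point needing care is to compare with the minimum of the components of $f(\bar x)$, rather than with $\Phi(\bar x)$. Comparing with $\Phi(\bar x)$ would only give $\Phi(x)<\Phi(\bar x)$, which need not force every component of $f(x)$ to lie strictly below the corresponding component of $f(\bar x)$. With the minimum, the strict inequality propagates to all components at once. It is also worth noting that the Lipschitz and differentiability assumptions are not needed; only the existence of a weakly efficient point is used.
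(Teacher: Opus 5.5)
Your proof is correct and takes essentially the paper's approach: the same choice $\Phi_{\rm low}:=\min_{j\in[m]} f_j(\bar x)$ and the same contradiction with weak efficiency of $\bar x$. The only difference is that the paper argues via a sequence with $\Phi(x^k)\to-\infty$ (which only rules out $\inf\Phi=-\infty$), whereas your pointwise argument directly establishes the explicit bound $\Phi(x)\geq \min_{j\in[m]} f_j(\bar x)$ for every $x\in\R^n$, which is slightly cleaner.
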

\begin{proof}
Define $\Phi_{\rm low}:=\min_{j\in[m]}f_j(\bar x)$ and assume there exists a sequence $(x^k)_{k\in\N}\subseteq\R^n$ with $\lim_{k\to\infty}\Phi(x^k)=-\infty$, i.e., we have for all $j\in [m]$ that $\lim_{k\to\infty}f_j(x^k)=-\infty$. Then there exists $N\in\N$ such that for all $j\in [m]$ we have $f_j(x^N)<\Phi_{\rm low}$   and thus $f_j(x^N)<f_j(\bar x)$, in contradiction to $\bar x$ weakly efficient for \eqref{P}.
\end{proof}

In this work, $\Phi$ will be used for deriving the convergence analysis, but will never be used in the proposed algorithm.

A necessary condition for a point $x\in\R^n$ for being weakly efficient for \eqref{P} is that $x$  is a Pareto critical point for \eqref{P} as defined next, see, among others, \cite{Fliege00}. The necessary and sufficient optimality conditions will be formulated in Lemma \ref{lem:necandsuf}.
\begin{definition}
    We call $x\in\R^n$ a Pareto critical point for \eqref{P} if there are scalars $\lambda_j\geq 0$, $j\in[m]$ with $\sum_{j=1}^m\lambda_j=1$ and 
    \[\sum_{j=1}^m\lambda_j\nabla f_j(x)=0.\]
\end{definition}
Within numerical algorithms,  the following function $\omega$  can be used for characterizing Pareto critical points. It is also called a proximity measure in the literature on constrained multi-objective optimization, e.g. in \cite{Eichfelder2020Proximity}, then taking also the constraint functions into account.

\begin{definition}
We define the criticality  measure $\omega\colon\R^n\to \R_+$ as the function which associates to each $x\in\R^n$ the optimal value of 
\begin{equation}\label{P3}\tag{$\Omega(x)$}
    \begin{array}{rl}
    \min\limits_{\lambda\in\R^m}\ &\|\sum\limits_{j=1}^m \lambda_j\nabla f_j(x)\|_2^2\\
  \mbox{s.t.}   &  \sum\limits_{j=1}^m \lambda_j=1, \quad
    \lambda\geq 0.
    \end{array}
\end{equation}
\end{definition}

The following result is easy to see:
\begin{lemma}\label{lemma:optim}
A point $x\in\R^n$ is Pareto critical for \eqref{P} if and only if $\omega(x)=0$. 
\end{lemma}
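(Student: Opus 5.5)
The plan is to prove both directions of Lemma~\ref{lemma:optim} directly from the definitions. The one preliminary point is that $\omega(x)$ is an attained minimum, not just an infimum. The feasible set of \eqref{P3} is the unit simplex $\Delta_m:=\{\lambda\in\R^m:\ \lambda\geq 0,\ \sum_{j=1}^m\lambda_j=1\}$. It is nonempty and compact. The objective $\lambda\mapsto\|\sum_{j=1}^m\lambda_j\nabla f_j(x)\|_2^2$ is continuous in $\lambda$, since it is a convex quadratic form. By Weierstrass' theorem there is therefore a minimizer $\lambda^*\in\Delta_m$ with
\[
\omega(x)=\Big\|\sum_{j=1}^m\lambda_j^*\nabla f_j(x)\Big\|_2^2 .
\]
This also shows $\omega(x)\geq 0$, consistent with $\omega$ mapping into $\R_+$.

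For the direction ``Pareto critical $\Rightarrow$ $\omega(x)=0$'': suppose $\lambda\in\Delta_m$ satisfies $\sum_{j}\lambda_j\nabla f_j(x)=0$. Then $\lambda$ is feasible for \eqref{P3} with objective value $0$. Hence $0\leq\omega(x)\leq 0$, so $\omega(x)=0$.

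For the converse: if $\omega(x)=0$, take the minimizer $\lambda^*\in\Delta_m$ from the preliminary step. Then $\|\sum_j\lambda_j^*\nabla f_j(x)\|_2^2=0$, and definiteness of the norm gives $\sum_j\lambda_j^*\nabla f_j(x)=0$. The multipliers $\lambda^*$ are nonnegative and sum to one, so $x$ is Pareto critical.

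There is no real obstacle here. The only step that needs care is the attainment of the minimum, since it is what turns ``optimal value $0$'' into an actual multiplier vector; compactness of the simplex settles it.
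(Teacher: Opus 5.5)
Your proof is correct and is the direct argument the paper has in mind. The paper gives no proof and only says the result is ``easy to see''. Your remark that the minimum in \eqref{P3} is attained on the compact simplex is the one detail needed to turn $\omega(x)=0$ into an actual multiplier vector.
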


In the literature, one can often find the following characterization for Pareto criticality of a point $x\in\R^n$, see \cite[Def. 2.2]{Fliege00}:
\begin{equation}\label{eq:critical}
\forall \ d\in\R^n\ \exists \ j\in[m]:\ \nabla f_j(x)^\top d\geq 0.
\end{equation}   
This condition is equivalent to that the optimal value of 
the optimization problem 
\begin{equation}\label{P1}\tag{mgrad(x)}
    \min\limits_{\|d\|_2^2\leq 1}\ \max\limits_{j\in[m]}\nabla  f_j(x)^\top d
\end{equation}
is zero with optimal solution $d=0$, see \cite[Lemma 3]{Fliege00}. The problem \eqref{P1} was, for instance, used in \cite{ThomannPhD19,ASMOP}. We show that this characterization is in fact equivalent to our definition above. While this is somewhat widely known, we give   a formal proof below.
\begin{lemma}\label{lemma:equiv}
A point $x\in\R^n$ is a Pareto critical point for \eqref{P} if and only if \eqref{eq:critical} holds.
\end{lemma}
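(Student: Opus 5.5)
The plan is to prove the two implications separately. The forward direction is a direct computation. The reverse direction uses a separation (theorem of the alternative) argument applied to the convex hull of the gradients.

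For the direction ``Pareto critical $\Rightarrow$ \eqref{eq:critical}'', take $\lambda\geq 0$ with $\sum_{j=1}^m\lambda_j=1$ and $\sum_{j=1}^m\lambda_j\nabla f_j(x)=0$. Fix any $d\in\R^n$. Then
\[
0=\Big(\sum_{j=1}^m\lambda_j\nabla f_j(x)\Big)^\top d=\sum_{j=1}^m\lambda_j\,\nabla f_j(x)^\top d .
\]
Suppose $\nabla f_j(x)^\top d<0$ held for all $j\in[m]$. Since the $\lambda_j$ are nonnegative and sum to one, the right-hand side would be strictly negative. This is a contradiction, so some $j$ satisfies $\nabla f_j(x)^\top d\geq 0$.

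For the converse, I argue by contraposition. Assume $x$ is not Pareto critical. Set
\[
C:=\mathrm{conv}\{\nabla f_1(x),\ldots,\nabla f_m(x)\}.
\]
Then $0\notin C$. The set $C$ is nonempty, convex and compact, so it has a unique element $v^*$ of minimal Euclidean norm. In fact $v^*=\sum_{j=1}^m\lambda^*_j\nabla f_j(x)$, where $\lambda^*$ solves \eqref{P3}, and $\|v^*\|_2^2=\omega(x)>0$ by Lemma~\ref{lemma:optim}.

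The key step is the variational inequality characterizing the projection of $0$ onto $C$:
\[
(v^*)^\top (v-v^*)\geq 0\quad\text{for all } v\in C .
\]
To derive it, take any $v\in C$. For $t\in[0,1]$ the point $v^*+t(v-v^*)$ lies in $C$, so $g(t):=\|v^*+t(v-v^*)\|_2^2\geq g(0)$ on $[0,1]$. Hence the one-sided derivative satisfies $g'(0^+)=2(v^*)^\top(v-v^*)\geq 0$.

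Applying this with $v=\nabla f_j(x)$ gives
\[
\nabla f_j(x)^\top v^*\geq \|v^*\|_2^2>0\quad\text{for every } j\in[m].
\]
Now take $d:=-v^*$. Then $\nabla f_j(x)^\top d<0$ for all $j\in[m]$, so \eqref{eq:critical} fails. This completes the contraposition.

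The main obstacle is the converse direction, specifically producing a common strict descent direction. The projection and variational-inequality argument above handles it without needing Farkas' or Gordan's lemma. Alternatively, one could cite Gordan's theorem of the alternative directly, applied to the matrix whose rows are the gradients $\nabla f_j(x)^\top$.
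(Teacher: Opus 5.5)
Your proof is correct, but it takes a different route from the paper's. The paper handles both directions with convex optimization theory. It reads \eqref{eq:critical} as optimality of $(0,0)$ for the epigraph problem \eqref{P1'} and invokes Slater's condition so that the KKT conditions are necessary. Since the ball constraint is inactive at $(0,0)$, stationarity in $t$ and $d$ gives exactly $\sum_j\lambda_j=1$ and $\sum_j\lambda_j\nabla f_j(x)=0$. For the converse it uses that KKT conditions are sufficient for convex problems. You instead settle the easy direction with a one-line sign argument. You prove the other direction by projecting $0$ onto $\mathrm{conv}\{\nabla f_1(x),\ldots,\nabla f_m(x)\}$, which is in effect a self-contained proof of Gordan's theorem of the alternative; the paper mentions that theorem but does not use it. Your argument needs no constraint qualification or Lagrangian machinery. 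It is also constructive: the direction $-v^*$ you produce is exactly $-g^s_k$ from \eqref{eq:defdescentd} when $x=x^k$. It even gives more than the lemma. Combining your inequality $\nabla f_j(x)^\top v^*\geq\|v^*\|_2^2$ for all $j$ with $\|v^*\|_2^2=\sum_j\lambda_j^*\nabla f_j(x)^\top v^*$ yields $\max_j\nabla f_j(x)^\top(-v^*)=-\|v^*\|_2^2$. This is Lemma~\ref{lemma:key}, together with the descent property of Lemma~\ref{lemma:P1andP3}(c), obtained without the duality computations of Lemma~\ref{lemma:P1andP3}(b). In exchange, the paper's approach identifies the Pareto-criticality weights directly as Lagrange multipliers of \eqref{P1'}, a viewpoint it reuses in Lemma~\ref{lemma:P1andP3}(a) and (b).
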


\begin{proof} As we could not find a proof in the easy accessible literature of this well-known result, we give it here for completeness. A proof using Gordon's Theorem can be found in the Master's thesis \cite{ThomannMaster}.
First, assume that \eqref{eq:critical} holds, i.e., $d=0$ is an optimal solution of \eqref{P1} and thus $(0,0)$ is an optimal solution of 
\begin{equation}\label{P1'}\tag{mgrad'(x)}
    \begin{array}{rl} \min\limits_{(t,d)\in\R^{1+n}}& t \\\mbox{s.t.}&
    \nabla  f_j(x)^\top d-t\leq 0,\ \forall\ j\in[m],\\
&  \|d\|_2^2\leq 1.
    \end{array}
\end{equation}
 For \eqref{P1'}, the Slater constraint qualification  holds, and thus any optimal solution has to satisfy the KKT optimality conditions. Note that the constraint $ \|d\|_2^2\leq 1$ is not active in the optimal solution $(0,0)$ while all the other constraints are active. Thus, there exist multipliers $\lambda_j\geq 0$, $j\in[m]$, with 
\begin{equation}\label{eq:KKTcritical} 
1+\sum\limits_{j=1}^m \lambda_j(-1)=0\
\ \mbox{ and }\ 
0+\sum\limits_{j=1}^m \lambda_j \nabla  f_j(x)=0. 
\end{equation}
As a consequence,  the point $x$ is a Pareto critical point for \eqref{P}. On the other hand, if $x$ is Pareto critical for \eqref{P}, then  the conditions \eqref{eq:KKTcritical} are satisfied for some $\lambda_j\geq 0$, $j\in[m]$. Moreover, for  $(0,0)$  all constraints of the form $\nabla  f_j(x)^\top d-t\leq 0$ are active, and thus the complementarity conditions are satisfied. As \eqref{P1'} is a convex optimization problem,   this is sufficient for $(0,0)$ being an optimal solution of \eqref{P1'}. Hence, the condition \eqref{eq:critical} is satisfied.  
\end{proof}

As a consequence,  the optimal value of \eqref{P1} is non-positive, and zero if and only if $x\in\R^n$ is Pareto critical for \eqref{P}.
Moreover, by  Lemma \ref{lemma:equiv} and, for instance, \cite[Theorem 3.1]{Fliege09}, we have the following necessary and sufficient optimality condition.  

\begin{lemma}\label{lem:necandsuf}
If a point $x\in\R^n$  is locally weakly efficient for \eqref{P}, then $x$ is Pareto critical. If the functions $f_j$, $j\in[m]$ are convex and $x\in\R^n$ is Pareto critical for~\eqref{P}, then $x$ is weakly efficient for~\eqref{P}. If the functions $f_j$, $j\in[m]$ are even strictly convex and $x\in\R^n$ is Pareto critical for~\eqref{P}, then $x$ is  efficient for~\eqref{P}.
\end{lemma}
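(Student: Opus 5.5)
Three separate implications are claimed, and I will prove each directly using Lemma \ref{lemma:equiv}, which reduces Pareto criticality to condition \eqref{eq:critical}.

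\textbf{Necessity.} Let $x$ be locally weakly efficient and suppose, for contradiction, that $x$ is not Pareto critical. By Lemma \ref{lemma:equiv}, \eqref{eq:critical} fails, so there is $d\in\R^n$ with $\nabla f_j(x)^\top d<0$ for all $j\in[m]$. For each $j$, continuous differentiability gives
\[
f_j(x+td)=f_j(x)+t\nabla f_j(x)^\top d+o(t).
\]
Hence there is $\bar t_j>0$ with $f_j(x+td)<f_j(x)$ for all $t\in(0,\bar t_j)$. Set $\bar t:=\min_j \bar t_j>0$; the minimum is over a finite index set, so it is positive. Choose $t<\bar t$ small enough that $x+td\in\mathcal N(x)$. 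Then $f(x+td)<f(x)$ componentwise, which contradicts local weak efficiency.

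\textbf{Sufficiency under convexity.} Let the $f_j$ be convex and $x$ Pareto critical. Suppose some $y$ satisfies $f_j(y)<f_j(x)$ for all $j$. By the gradient inequality for convex functions,
\[
\nabla f_j(x)^\top (y-x)\le f_j(y)-f_j(x)<0 \quad \text{for all } j.
\]
So $d=y-x$ violates \eqref{eq:critical}, which contradicts Lemma \ref{lemma:equiv}. Therefore $x$ is weakly efficient.

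\textbf{Sufficiency under strict convexity.} Let the $f_j$ be strictly convex and $x$ Pareto critical. Suppose $y$ satisfies $f(y)\le f(x)$ and $y\neq x$. If $y=x$ there is nothing to show, and if $f(y)=f(x)$ the point $y$ does not contradict efficiency, so I argue directly that no such $y\neq x$ can exist. Strict convexity gives the strict gradient inequality for $y\ne x$:
\[
\nabla f_j(x)^\top(y-x)<f_j(y)-f_j(x)\le 0 \quad \text{for all } j.
\]
This again violates \eqref{eq:critical}. Hence every $y$ with $f(y)\le f(x)$ equals $x$, and in particular $f(y)=f(x)$, so $x$ is efficient.

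The strict gradient inequality used here is standard. It can be proved from the midpoint $z=(x+y)/2$: strict convexity gives $f_j(z)<\tfrac12(f_j(x)+f_j(y))$, and the ordinary gradient inequality applied at $z$ gives $f_j(z)\ge f_j(x)+\tfrac12\nabla f_j(x)^\top(y-x)$; combining the two yields the strict inequality.

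Alternatively, all three statements could be cited from \cite[Theorem 3.1]{Fliege09}, provided one checks that the criticality notion used there coincides with the one here via Lemma \ref{lemma:equiv}.

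The only real subtlety is in the necessity part: the step sizes $\bar t_j$ must be chosen uniformly across $j$, which works because $m$ is finite. That step is straightforward.
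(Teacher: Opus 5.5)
Your proof is correct, but it takes a different route from the paper, mainly in being self-contained. The paper gives no argument of its own. It obtains the statement from \cite[Theorem 3.1]{Fliege09}, using Lemma~\ref{lemma:equiv} to translate between its multiplier definition of Pareto criticality and condition \eqref{eq:critical}, which is the criticality notion used there. For the strictly convex case it refers to \cite{BurachikKayaRizvi2017Scalarization} and remarks that under strict convexity every weakly efficient point is efficient. So the paper reaches the third claim by a chain: Pareto critical $\Rightarrow$ weakly efficient (the second claim, since strictly convex functions are convex) $\Rightarrow$ efficient. The step the paper calls easy is, e.g., a midpoint argument: if $y\neq x$ and $f(y)\le f(x)$, then $z=\tfrac12(x+y)$ gives $f_j(z)<\tfrac12 f_j(x)+\tfrac12 f_j(y)\le f_j(x)$ for every $j$, contradicting weak efficiency. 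You instead go directly from \eqref{eq:critical} to efficiency via the strict gradient inequality, which you prove with the same midpoint inequality; the ingredients coincide and only their arrangement differs. Your route makes the lemma independent of the literature and shows explicitly that $x$ is the only point with $f(y)\le f(x)$. The paper's route is shorter and reuses the second claim. Your closing remark about citing \cite{Fliege09} is essentially what the paper does for the first two claims. As an aside, for the second and third claims you could bypass Lemma~\ref{lemma:equiv} altogether. With the multipliers $\lambda$ from the definition, $\sum_j\lambda_j\bigl(f_j(y)-f_j(x)\bigr)\ge\sum_j\lambda_j\nabla f_j(x)^\top(y-x)=0$, with strict inequality if $y\neq x$ and the $f_j$ are strictly convex. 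This rules out $f(y)<f(x)$, respectively $f(y)\le f(x)$ with $y\neq x$. Finally, the sentence ``If $y=x$ there is nothing to show\ldots'' in your third part is redundant once you have assumed $y\neq x$, and can be deleted.
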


Thereby the last statement was discussed for instance in \cite{BurachikKayaRizvi2017Scalarization}, and it can easily be shown that under these conditions any weakly efficient point for \eqref{P} is also efficient for \eqref{P}.

\section{The Multi-Objective Adagrad Algorithm}\label{sec:MO-Adagrad}
Building on ideas originally developed for single-objective objective-function-free optimization, we propose the algorithm outlined in Algorithm~\ref{alg:MOADG}, named~\texttt{MO-Adagrad}. At every iteration $k$, after computing a common descent direction for the objective functions, denoted by $-g^s_k\in \R^n$ and computed by solving~\eqref{P3} in $x=x^k$ (see Step \ref{step:gsk}), the update step $s^k$ is obtained by scaling this direction with suitable weights $w_k$.
These weights (see Step \ref{step:wk}) accumulate the $\ell_2$-norm of past descent directions in an analogous fashion as in the Adagrad-norm~\cite{Adagrad-Norm} algorithm.
\begin{algorithm}{}
  \caption{\texttt{MO-Adagrad}}\label{alg:MOADG} 
  	\begin{algorithmic}[1]
    \State Initialization: a starting point $x^{0} \in \R^n$ and  a constant $\varsigma\in(0,1)$ are given. Set $k=0$ and   the initial weight scalar $w_{-1} =\sqrt{\varsigma}$.    
    
  \State Solve \eqref{P3} for $x = x^k$ with optimal solution $\lambda^k$ to obtain $g^s_k\in \R^n$ as in \eqref{eq:defdescentd}\label{step:gsk}
  
  \State Compute the weights \label{step:wk}
  \begin{equation}\label{weig}
        w_{k} = \sqrt{(w_{k-1})^2+ \|g^s_{k}\|_2^2}
  \end{equation}
  \State  Compute the step
   \[  s^k = - \frac{1}{w_k}g^s_k\] 
  \State   Define
   $$  \qquad x^{k+1} = x^k + s^k,$$ 
    increment $k$ by one and return to Step~2.
  \end{algorithmic}
  \end{algorithm}

In the following, we formally define our direction $g_k^s$ and report a number of theoretical results related to it which are needed for the subsequent convergence rate analysis.

\begin{definition}
  Let  $x^k\in\R^n$ be any iteration point and denote with $\lambda^k\in\R^m$ an  optimal solution of \eqref{P3} for $x=x^k$. Then we define the vector $\gk\in\R^n$ by 
\begin{equation}\label{eq:defdescentd}
\gk:=\sum_{j=1}^m\lambda^k_j\nabla f_j(x^k)
\end{equation}
and, in case of $\omega(x^k)>0$,  the negative of the normed vector of $g_k^s$ by 
\[\nk:=-\frac{1}{\|\gk \|_2}\gk.\]
\end{definition}
Observe that $\|\gk\|_2^2=\omega(x^k)$ and thus, by Lemma \ref{lemma:optim}, $\gk=0$ if and only if $\omega(x^k)=0$.

\begin{remark}
Note that when solving the problem~\eqref{P3} one searches for a vector with smallest Euclidean norm in the convex hull of the gradients of the functions $f_j$ in $x$. As the norm is strictly convex, this vector, which is exactly $\gk$ from the definition above, is uniquely defined. 
\end{remark}

For the following, we define a function $h\colon\R^n\to\R$ for some fixed $x\in\R^n$ by 
\begin{equation}\label{eq:defh}
    h(d):=\left(\max_{j\in[m]}\nabla  f_j(x)^\top d\right)+\frac{1}{2} \|d\|^2_2.
\end{equation}
For the convergence analysis of our algorithm it is important to relate 
\[\max_{j\in[m]}\nabla  f_j(x^k)^\top (-\gk),\] that is, the objective function of~\eqref{P1} evaluated at $x=x^k$ in $d=-\gk$, with the quantity $\omega(x^k)$.  The needed result is stated in the forthcoming Lemma~\ref{lemma:key}. For the proof of this Lemma~\ref{lemma:key}, we need to first relate the value of the function $h$ as defined in (\ref{eq:defh}) when evaluated at $x=x^k$ in $d=-\gk$ with $\omega(x^k)$. We also clarify that the obtained directions $-\gk$ and $\nk$ are in fact descent directions in case $x^k$ is not yet a Pareto critical point. 

\begin{lemma}\label{lemma:P1andP3}
Let $x^k\in\R^n$ be any iteration point which is not a Pareto critical point for \eqref{P}, that is, $\omega(x^k)>0$. Then the following statements hold.
\begin{itemize}
\item[(a)]
The direction $\nk$ is an optimal solution of \eqref{P1}
for $x=x^k$.  
\item[(b)]
The direction $-\gk$ is the unique  optimal solution of  
\begin{equation}\label{P2}\tag{pgrad(x)}
    \min\limits_{d\in\R^n}\ h(d)
\end{equation}
for $x=x^k$, and for the optimal value $h(-\gk)$ we have 
\begin{equation}\label{eq:optvrelation}
  \omega(x^k)=\|\gk\|_2^2=-2h(-\gk)\korange{,}
\end{equation}
with $h$ as defined in \eqref{eq:defh}.
    \item[(c)]
The direction $\nk$, and thus also $-\gk$, is a common descent direction of $f$ in $x^k$, i.e., there exists $t_0>0$ such that 
\[\forall j\in[m],\ \forall t'\in(0,t_0]:\ f_j(x^k+t'\,\nk)<f_j(x^k).\]
\end{itemize}
\end{lemma}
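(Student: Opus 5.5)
The central tool is the variational inequality that characterizes $\gk$ as the minimum-norm element of the convex hull $C:=\mathrm{conv}\{\nabla f_j(x^k): j\in[m]\}$. Since $\gk$ is the projection of $0$ onto the compact convex set $C$, we have
\[
(v-\gk)^\top \gk\geq 0 \quad\text{for all } v\in C,
\]
and in particular $\nabla f_j(x^k)^\top \gk\geq \|\gk\|_2^2$ for every $j\in[m]$. Moreover, equality holds for every $j$ with $\lambda^k_j>0$. This follows from the KKT conditions of \eqref{P3}, or directly: averaging these inequalities with the weights $\lambda^k_j$ gives exactly $\|\gk\|_2^2$, so no inequality with a positive weight can be strict. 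Consequently
\[
\max_{j\in[m]}\nabla f_j(x^k)^\top(-\gk)=-\|\gk\|_2^2=-\omega(x^k).
\]
All three items will be derived from this identity together with minimax duality.

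\textbf{Part (b).} We rewrite the max as a max over $C$, so that $h(d)=\max_{v\in C} v^\top d+\tfrac12\|d\|_2^2$. Since $h$ is a maximum of convex functions plus a strongly convex term, it is strongly convex and has a unique minimizer. For the lower bound, every $d$ satisfies
\[
h(d)\geq \gk^\top d+\tfrac12\|d\|_2^2\geq -\tfrac12\|\gk\|_2^2,
\]
where the second inequality comes from minimizing the quadratic in $d$. For the matching upper bound, the identity above gives $h(-\gk)=-\|\gk\|_2^2+\tfrac12\|\gk\|_2^2=-\tfrac12\|\gk\|_2^2$. Hence $-\gk$ attains the lower bound, so it is the unique optimum, and \eqref{eq:optvrelation} follows.

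\textbf{Part (a).} The identity gives the objective value of \eqref{P1} at $\nk$ as $\max_j \nabla f_j(x^k)^\top \nk=-\|\gk\|_2$. For the lower bound, every $d$ with $\|d\|_2\leq1$ satisfies
\[
\max_j\nabla f_j(x^k)^\top d\geq \gk^\top d\geq -\|\gk\|_2
\]
by Cauchy--Schwarz, since $\gk$ is a convex combination of the gradients. Therefore $\nk$ is optimal, and the optimal value is $-\sqrt{\omega(x^k)}$.

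\textbf{Part (c).} For each $j$, the directional derivative satisfies $\nabla f_j(x^k)^\top\nk\leq -\|\gk\|_2<0$. By differentiability, $f_j(x^k+t\nk)=f_j(x^k)+t\nabla f_j(x^k)^\top \nk+o(t)$, so there is a $t_j>0$ such that $f_j(x^k+t'\nk)<f_j(x^k)$ for all $t'\in(0,t_j]$. Taking $t_0=\min_j t_j>0$ works because there are only finitely many objectives. The same holds for $-\gk$, since it is a positive multiple of $\nk$.

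\textbf{Main obstacle.} The only delicate step is establishing the projection inequality, in particular that equality holds on the active indices. I would derive it by perturbing $\gk$ toward any $v\in C$: the function $\|\gk+t(v-\gk)\|_2^2$ has nonnegative derivative at $t=0$, which gives the inequality, and the averaging argument above gives the equality. The rest is routine.
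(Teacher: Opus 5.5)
Your proof is correct, and it takes a genuinely different route from the paper.

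The paper works with the auxiliary convex programs \eqref{P1'} and \eqref{P2'}:
\begin{itemize}
\item In (a) it verifies the KKT conditions of \eqref{P1'} at $(t^k,n_k^s)$ with multipliers $\lambda^k$ and $\xi^k=\|g_k^s\|_2/2$. Complementarity is proved by contradiction: shifting a weight $\varepsilon$ from an inactive index $\ell$ with $\lambda^k_\ell>0$ to an active index $\ell'$ would decrease the objective of \eqref{P3}.
\item In (b) it checks the KKT conditions of \eqref{P2'} in the same way. It then gets the optimal value by computing the Lagrangian dual of \eqref{P2'}, which is $-\tfrac12$ times \eqref{P3}, and invoking strong duality under Slater's condition.
\item In (c) it combines (a) with Lemma~\ref{lemma:equiv} to conclude that the optimal value of \eqref{P1} is negative.
\end{itemize}

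Your variational inequality encodes the same first-order optimality of $\lambda^k$ that the paper's $\varepsilon$-shift exploits. The difference is that you use it once, at the start, to establish $\max_j\nabla f_j(x^k)^\top(-g_k^s)=-\|g_k^s\|_2^2$. The paper obtains this identity only afterwards, as Lemma~\ref{lemma:key}, as a consequence of (b). After that, each of your parts is an elementary sandwich: the bound $\max_j a_j\ge\sum_j\lambda^k_j a_j$ gives a lower bound, and the identity shows it is attained.

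This buys you several things:
\begin{itemize}
\item a shorter, self-contained proof that needs no constraint qualification, KKT sufficiency, or duality theorem;
\item quantitative facts the paper does not state, namely that the optimal value of \eqref{P1} is exactly $-\sqrt{\omega(x^k)}$ and that $\nabla f_j(x^k)^\top n_k^s\le-\|g_k^s\|_2$ for all $j$, so (c) needs no appeal to Lemma~\ref{lemma:equiv};
\item uniqueness in (b) even without strong convexity, since your chain shows that $h(d)=-\tfrac12\|g_k^s\|_2^2$ forces $\tfrac12\|d+g_k^s\|_2^2\le 0$.
\end{itemize}

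In exchange, the paper's route gives an explicit primal--dual link between \eqref{P2'} and \eqref{P3}, with the correct factor $-\tfrac12$. That places the direction within the standard steepest-descent framework for multi-objective problems.
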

\begin{proof}

We start with proving (a). To this end, we set \[t^k:=\max\limits_{j\in[m]}\nabla  f_j(x^k)^\top \nk, \] 
and we  show that $(t^k,\nk)$   is feasible for the optimization problem \eqref{P1'} for $x=x^k$ 
and satisfies the KKT optimality conditions for  this problem  with Lagrange multipliers $\lambda_j^k$, $j\in[m]$ and $\xi^k:=\|\gk\|_2/2\geq 0$. As \eqref{P1'} is a convex optimization problem, this is sufficient for $(t^k,\nk)$  being optimal for \eqref{P1'} and thus for $\nk$ being optimal for \eqref{P1} for $x=x^k$. 

First, note that $\|\nk\|_2^2=1$. The Lagrange function reads as
\[L(t,d,\lambda^k,\xi^k)=t+\sum_{j=1}^m\lambda_j^k(\nabla f_j(x^k)^\top d-t)+\xi^k(d^\top d-1).\]
As  $\lambda^k$ is feasible for \eqref{P3} for $x=x^k$ we obtain that the partial  derivative of $L$ as defined above  w.r.t.\ $t$ equals zero.
For the partial derivatives w.r.t.\ the components of $d$ we obtain the condition 
\[ \sum_{j=1}^m\lambda_j^k \nabla f_j(x^k) +2\xi^kd=0, \]
which is satisfied for $d=\nk$ by the definition of $\xi^k$ and of $\nk$.
 It remains to show that the complementarity conditions are  fulfilled, i.e., that the Lagrange-multipliers to the inactive constraints are zero. As $\|\nk\|_2^2=1$ it remains to show that for  all $j\in[m]$ we have 
\begin{equation}\label{eq:cc}
\lambda_j^k(\nabla f_j(x^k)^\top \nk-t^k)=0.
\end{equation}
Let $J^k:=\{j\in[m]\mid  t^k=\nabla f_j(x^k)^\top \nk\}$. Note that $J^k\not=\emptyset$.
Now assume there is an index  $\ell\in [m]\setminus J^k$ with $\lambda_{\ell}^k>0$ and let $\ell'\in J^k$. Then this implies  
$ \nabla f_{\ell}(x^k)^\top \nk< \nabla f_{\ell'}(x^k)^\top \nk=t^k$.
By the definition of $\nk$ we get 
\[\gamma:=(\nabla f_{\ell'}(x^k)-\nabla f_{\ell}(x^k))^\top  \gk<0.\]
Next we define for any  $\varepsilon \in(0,\lambda_{\ell}^k)$
the vector $\tilde\lambda\geq 0$ by $\tilde\lambda_{j}:=\lambda^k_{j}$ for all $j\in[m]\setminus\{\ell,\ell'\}$  and 
\[ 
\tilde\lambda_{\ell}:=\lambda^k_{\ell}-\varepsilon,\qquad 
\tilde\lambda_{\ell'}:=\lambda^k_{\ell'}+\varepsilon.
\]
Then $\tilde\lambda$ is feasible for \eqref{P3} for $x=x^k$ as $\lambda^k$ is feasible for \eqref{P3} for $x=x^k$. We get for the objective function value of the problem \eqref{P3}  for $x=x^k$ that 
\[\begin{array}{rcl}
\|\sum\limits_{j=1}^m \tilde \lambda_j\nabla f_j(x^k)\|_2^2
&=&
\|(\sum\limits_{j=1}^m  \lambda_j^k\nabla f_j(x^k))+\varepsilon \nabla f_{\ell'}(x^k)-\varepsilon \nabla f_{\ell}(x^k)\|_2^2\\
&=&
\|\gk+\varepsilon (\nabla f_{\ell'}(x^k)-\nabla f_{\ell}(x^k))\|_2^2\\[2ex]
&=& \|\gk\|_2^2+ \varepsilon^2 \|\nabla f_{\ell'}(x^k)-\nabla f_{\ell}(x^k)\|_2^2+2\varepsilon (\nabla f_{\ell'}(x^k)-\nabla f_{\ell}(x^k))^\top \gk
\\[2ex]
&=& \|\gk\|_2^2+ \varepsilon(\varepsilon \|\nabla f_{\ell'}(x^k)-\nabla f_{\ell}(x^k)\|_2^2+2\gamma ).
\end{array}
\]
Recall  that $\gamma<0$. 
In case  $\|\nabla f_{\ell'}(x^k)-\nabla f_{\ell}(x^k)\|_2^2=0$ this is a contradiction to $ \|\gk\|_2^2$ being the optimal value of \eqref{P3} for $x=x^k$. Otherwise, for $\varepsilon$ small enough, 
we also get a contradiction to  $ \|\gk\|_2^2$ 
being the optimal value of \eqref{P3} for $x=x^k$. Thus, the assertion is proven.

For proving part (b), instead of \eqref{P2} we examine the following optimization problem 
\begin{equation}\label{P2'}\tag{pgrad'(x)}
    \begin{array}{rl} \min\limits_{(t,d)\in\R^{1+n}}& t+\frac{1}{2} \|d\|_2^2 \\\mbox{s.t.}&
     \nabla  f_j(x)^\top d-t\leq 0,\  \forall \ j\in[m] 
    \end{array}
\end{equation} 
for $x=x^k$. As this problem has a strictly convex objective function, there is not more than one optimal solution $(\bar t^k,\bar d^k)$ and we aim at showing that this is the   point $\bar t^k:=\max_{j\in[m]} f_j(x^k)^\top(-\gk)$ and $\bar d^k:=-\gk$, which is obviously feasible. We do this by showing that the KKT conditions are satisfied, as those are sufficient for this convex optimization problem. It is easy to verify that in $(\bar t^k,\bar d^k)$ with  the Lagrange-multipliers $\lambda^k\geq 0$ the derivative of the Lagrange function w.r.t.\ $t$ and $x$ vanishes. To see this, recall that $\sum_{j=1}^m\lambda^k_j=1$ as $\lambda^k$ is an optimal solution of \eqref{P3} for $x=x^k$. Next, note that the  condition  \eqref{eq:cc} can be written as follows, after multiplying the equation with $\|\gk\|_2$,
\[
\lambda_j^k\left(\nabla f_j(x^k)^\top (-\gk)-\bar t^k\right)=0\]
for all $j\in[m]$. 
With the same argumentation as in the proof of (a) we can argue that this condition has to be fulfilled, which is the complementarity condition  of \eqref{P2'}  for $x=x^k$ in the point $(\bar t^k,\bar d^k)$.  Thus, $(\bar t^k,\bar d^k)$
is a KKT point of \eqref{P2'} and hence the unique optimal solution, for $x=x^k$.  As a consequence, $\bar d^k=-\gk$ is the unique optimal solution of \eqref{P2} for $x=x^k$. 

It remains to show that \eqref{eq:optvrelation} holds, and we do this by showing that the dual problem of \eqref{P2'} is (next to some scalar multiplications and sign changes) the problem \eqref{P3}. In \cite{StochMO24Vicente} it was   already claimed that these problems are dual to each other, but the factor $-0.5$ has to be taken into account, see below.    The formulation of the dual problem of 
\eqref{P2'}, including this factor, can be found in \cite{chen22} on pages 413-414, and the calculation of $\bar d(\lambda)$ below can also be found in \cite{FliegeVicente}. We give a rigorous proof here for completeness. 

We always fix, as before, $x=x^k$.  For formulating the objective function of the  dual problem to \eqref{P2'} we need to minimize the  Langrange-function w.r.t.\ $t\in\R$ and $d\in\R^n$ for fixed $\lambda\geq 0$. As the Lagrange-function to \eqref{P2'} is defined by 
\[
\begin{array}{rcl}
L(t,d,\lambda)&=&t+\frac{1}{2}d^\top d+\sum_{j=1}^m \lambda_j(\nabla f_j(x^k)^\top d-t)
\\&=&t\,(1-\sum_{j=1}^m \lambda_j)+\frac{1}{2}d^\top d+\sum_{j=1}^m \lambda_j \nabla f_j(x^k)^\top d, 
\end{array}
\]
we get that the infimum is $-\infty$ unless $\sum_{j=1}^m \lambda_j=1$. 
In case of a $\lambda\geq 0$ with  $\sum_{j=1}^m \lambda_j=1$ the infimum  is attained in any $\bar t(\lambda)\in\R$ and 
\[\bar d(\lambda):=-\sum_{j=1}^m \lambda_j\nabla f_j(x^k).\] 
Thus the objective function of the dual problem reads, for any  $\lambda\geq 0$ with  $\sum_{j=1}^m \lambda_j=1$, as
\[q(\lambda):=\inf_{(t,d)\in\R\times \R^n}L(t,d,\lambda)= \frac{1}{2}\bar d(\lambda)^\top \bar d(\lambda)+\sum_{j=1}^m \lambda_j\nabla f_j(x^k)^\top \bar d(\lambda)=-\frac{1}{2}\bar d(\lambda)^\top\bar d(\lambda).\]
Thus, the dual problem can be formulated as 
\[
\begin{array}{rl}
    \max\limits_{\lambda\in\R^m}\ &-\frac{1}{2}\|\bar d(\lambda)\|^2_2\\[2ex]
  \mbox{s.t.}   &  \sum_{j=1}^m \lambda_j=1,\ \lambda\geq 0,
    \end{array}
\]
which corresponds to 
\[
\begin{array}{rl}
    -\frac{1}{2}\min\limits_{\lambda\in\R^m}\ &\|\sum_{j=1}^m \lambda_j\nabla f_j(x^k)\|^2_2\\[2ex]
  \mbox{s.t.}   &  \sum_{j=1}^m \lambda_j=1,\ \lambda\geq 0
    \end{array}
\]
with optimal value, cf.\ \eqref{P3} for $x=x^k$, equal to $-0.5\omega(x^k)=-0.5\|\gk\|^2_2$.
As \eqref{P2'} is convex and the Slater constraint qualification holds, we have strong duality. The optimal value of \eqref{P2'} is $h(-\gk)$, and thus the assertion is shown.

For part (c), as $\nk$ is optimal for \eqref{P1} for $x=x^k$ by (a) and as the optimal value of \eqref{P1} for $x=x^k$  has to be negative by Lemma \ref{lemma:equiv} and by the non-criticality of $x^k$, we have 
$ \max_{j\in[m]}\nabla  f_j(x^k)^\top  \nk <0$.
Note that it suffices to have that for all $j\in[m]$  it holds  $\nabla f_j(x^k)^\top \nk<0$ 
to have that $\nk$ is a common descent direction, cf. \cite[Lemma 2.5]{ThomannPhD19}. 
\end{proof}

Above, we have shown that the direction $-\gk$ is \korange{a} common descent direction. It has been called  negative multi-gradient in \cite{StochMO24Vicente}. 
The problem \eqref{P2} as well as the reformulation \eqref{P2'} have   been studied in \cite{Fliege00} for calculating a steepest descent direction. As noted there in Lemma 1 this problem can also be used to characterize Pareto criticality: a point $x$ is Pareto critical if and only if the optimal value of \eqref{P2} is zero. Otherwise, the optimal value is negative. It has also been stated that there are other possibilities to determine  a so-called search direction, and the authors in \cite{Fliege00} propose to solve a variant of \eqref{P1} w.r.t.\ the norm $\|\cdot\|_\infty$.
With Lemma \ref{lemma:P1andP3}  we have  shown that, next to scaling, the directions obtained from \eqref{P2} and \eqref{P1} can be considered to be the same. 

Finally, we relate the negative of the objective function  of \eqref{P1} evaluated at $x=x^k$ in  $d=-\gk$ with $\omega(x^k)$ in the following lemma.

\begin{lemma}\label{lemma:key}
 Let  $x^k\in\R^n$ be any iteration point. Then  
\begin{equation}\label{eq:key}
    \|\gk\|_2^2  = -  \max_{j\in [m]}\nabla f_j(x^k)^\top (-\gk) 
\end{equation}
holds.
\end{lemma}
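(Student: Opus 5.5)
Two cases suffice, with the main work delegated to Lemma~\ref{lemma:P1andP3}(b). If $\omega(x^k)=0$, then $\gk=0$ by the observation after the definition of $\gk$. Both sides of \eqref{eq:key} are then zero, since $\max_{j\in[m]}\nabla f_j(x^k)^\top 0=0$. Hence we may assume $\omega(x^k)>0$ throughout.

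In the case $\omega(x^k)>0$, we use \eqref{eq:optvrelation} from Lemma~\ref{lemma:P1andP3}(b), namely $\|\gk\|_2^2=-2h(-\gk)$. Expanding $h$ from \eqref{eq:defh} at $d=-\gk$ gives
\[
-2h(-\gk) = -2\max_{j\in[m]}\nabla f_j(x^k)^\top(-\gk) - \|\gk\|_2^2 .
\]
Substituting this into the relation and rearranging yields $2\|\gk\|_2^2=-2\max_{j\in[m]}\nabla f_j(x^k)^\top(-\gk)$. Dividing by two gives \eqref{eq:key}.

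As an independent check that does not rely on duality, we can argue directly from complementarity. Writing $\gk=\sum_j\lambda^k_j\nabla f_j(x^k)$ gives
\[
\|\gk\|_2^2=\sum_{j}\lambda^k_j\nabla f_j(x^k)^\top\gk .
\]
Complementarity \eqref{eq:cc}, multiplied by $\|\gk\|_2$, states that every index with $\lambda^k_j>0$ attains $\max_{i\in[m]}\nabla f_i(x^k)^\top(-\gk)$. Since $\sum_j\lambda^k_j=1$, the sum therefore equals $-\max_{j\in[m]}\nabla f_j(x^k)^\top(-\gk)$. The only point needing care is confirming that \eqref{eq:cc} was established for arbitrary non-critical $x^k$, which it was in the proof of Lemma~\ref{lemma:P1andP3}(a). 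Everything else is algebra.
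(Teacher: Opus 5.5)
Your proof is correct and follows the paper's argument: the critical case is trivial, and in the non-critical case you rearrange $\|\gk\|_2^2=-2h(-\gk)$ from Lemma~\ref{lemma:P1andP3}(b) after expanding $h$. Your complementarity-based check is also valid, and it avoids the duality argument entirely, since it needs only \eqref{eq:cc} and $\sum_j\lambda^k_j=1$.
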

\begin{proof}
First, assume that $x^k$ is a Pareto critical point for \eqref{P}. Then  we have $\omega(x^k)=0$ and $\gk=0$. Then \eqref{eq:key} trivially holds. 
Next, we assume  that  $x^k$ is not  a Pareto critical point for \eqref{P} and thus $\omega(x^k)>0$. 
Then, by Lemma \ref{lemma:P1andP3}(b), we get
$\|g_k^s\|_2^2=-2h(-g_k^s)$ where the function $h$ is defined in (\ref{eq:defh}). 
By a simple calculation we derive \eqref{eq:key}.
\end{proof}

\subsection{Convergence rate analysis}\label{sec:convergence}

Recall that by \eqref{eq:Phi} we have  $\Phi(x) = \max_{j\in [m]} f_j(x)$ and that  $L_{\max}$  was defined   in \eqref{eq:Lmax}.
We start by stating a result on the descent in each iteration w.r.t.\ the values of the function $\Phi$.

\begin{lemma}\label{LemmaPhi}
Suppose that
Algorithm \ref{alg:MOADG} is applied to  \eqref{P}. Then, for all $k\geq 0$ it holds
\begin{equation}\label{eq:lemma21-1}
  \Phi(x^{k+1}) \leq \Phi(x^k) - \frac{\|g_k^s\|_2^2}{w_{k}} + \frac{L_{\max}}{2}  \|s^k\|_2^2,
\end{equation}
and
\begin{equation}\label{eq:lemma21-2}
  \Phi(x^0) - \Phi(x^{k+1}) \geq   \sum_{\ell = 0}^k  \frac{\|g^s_\ell\|_2^2}{w_\ell}  - \frac{L_{\max}}{2}\sum_{\ell=0}^k  \frac{\|g^s_\ell\|_2^2}{w_\ell^2}.
\end{equation}
\end{lemma}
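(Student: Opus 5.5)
The plan is to apply the descent lemma to each objective separately, take the maximum over $j\in[m]$, and then use Lemma~\ref{lemma:key} to bound the first-order term. Summing the resulting per-iteration inequality gives \eqref{eq:lemma21-2}.

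\textbf{Per-objective bound.} Fix $k\geq 0$ and $j\in[m]$. Since $\nabla f_j$ is Lipschitz continuous with constant $L_j\leq L_{\max}$, the standard descent lemma gives
\[
f_j(x^{k+1}) \leq f_j(x^k) + \nabla f_j(x^k)^\top s^k + \frac{L_{\max}}{2}\|s^k\|_2^2 .
\]

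\textbf{Bounding the linear term.} Because $s^k = -\frac{1}{w_k}\gk$ and $w_k\geq \sqrt{\varsigma}>0$, the linear term is $\frac{1}{w_k}\nabla f_j(x^k)^\top(-\gk)$. This is at most $\frac{1}{w_k}\max_{i\in[m]}\nabla f_i(x^k)^\top(-\gk)$, and by Lemma~\ref{lemma:key} that maximum equals $-\|\gk\|_2^2$. Hence, for every $j$,
\[
f_j(x^{k+1}) \leq f_j(x^k) - \frac{\|\gk\|_2^2}{w_k} + \frac{L_{\max}}{2}\|s^k\|_2^2 .
\]

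\textbf{Passing to $\Phi$.} Since $f_j(x^k)\leq \Phi(x^k)$, the right-hand side is at most $\Phi(x^k) - \frac{\|\gk\|_2^2}{w_k} + \frac{L_{\max}}{2}\|s^k\|_2^2$, which no longer depends on $j$. Taking the maximum over $j$ on the left yields \eqref{eq:lemma21-1}. The Pareto critical case $\gk=0$ is covered automatically, since then $s^k=0$ and both sides coincide.

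\textbf{Summation.} For \eqref{eq:lemma21-2}, substitute $\|s^\ell\|_2^2 = \|g^s_\ell\|_2^2/w_\ell^2$ into \eqref{eq:lemma21-1} and sum over $\ell=0,\dots,k$. The differences $\Phi(x^\ell)-\Phi(x^{\ell+1})$ telescope to $\Phi(x^0)-\Phi(x^{k+1})$, and rearranging gives the claimed inequality.

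\textbf{Main obstacle.} The only subtle step is replacing the individual inner products by the maximum and evaluating that maximum through Lemma~\ref{lemma:key}. The direction $-\gk$ is not the steepest descent direction of each $f_j$ individually, so the uniform decrease of $\|\gk\|_2^2/w_k$ across all objectives rests entirely on that lemma. Everything else is routine.
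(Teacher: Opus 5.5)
Your proposal is correct and follows essentially the same route as the paper: the descent lemma for each $f_j$, with the linear term bounded via Lemma~\ref{lemma:key} to give \eqref{eq:lemma21-1}, then summing and telescoping for \eqref{eq:lemma21-2}. The only cosmetic difference is that the paper takes the maximum over $j$ first and then splits it, whereas you bound each $f_j(x^{k+1})$ individually before taking the maximum; both are equally valid.
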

\begin{proof} 
Recall that by Lemma \ref{lemma:key} we have  $ -\|\gk\|_2^2  =   \max_{j\in [m]}\nabla f_j(x^k)^\top (-\gk)$.
Considering that 
\[x^{k+1} - x^k = s^k = - \frac{1}{w_k}g^s_k,\] by the mean value theorem applied to each function $f_j,\, j\in [m]$, we have:

\[
\begin{array}{rl}
  \Phi(x^{k+1}) \leq &  \max_{j\in [m]} \left\{f_j(x^k) + \nabla f_j (x^k)^T s^k + \frac{L_j}{2} \|s^k\|_2^2 \right\}\\ [1.3ex]
     \leq  &  \displaystyle \Phi(x^k) + \frac{1}{w_{k}}  \max_{j\in [m]}\nabla f_j(x^k)^\top (-g_k^s) + \frac{L_{\max}}{2}  \|s^k\|_2^2\\ [1.5ex]
     =  &  \displaystyle  \Phi(x^k) - \frac{\|g_k^s\|_2^2}{w_{k}} + \frac{L_{\max}}{2} \|s^k\|_2^2,
\end{array}
\]
that is \eqref{eq:lemma21-1}.
Summing for $\ell = 0,\ldots,k$  gives 
\[
\begin{array}{rl}
  \Phi(x^{k+1}) \leq & \displaystyle  \Phi(x^0) - \sum_{\ell=0}^k \frac{\|g_\ell^s\|_2^2}{w_\ell} + \frac{L_{\max}}{2} \sum_{\ell=0}^k  \frac{\|g^s_\ell\|_2^2}{w_\ell^2}, \\ [1.5ex]
\end{array}
\]
that is inequality \eqref{eq:lemma21-2}. 
\end{proof}

Lemma \ref{LemmaPhi} gives a typical descent result when using Adagrad-like stepsizes, see e.g. \cite{gratton2024complexity,gratton2024bounds,prunAdag}: when the second term in the right hand-side of inequality (\ref{eq:lemma21-2}) is larger than the first term, the function $\Phi$ may increase from one iteration to the subsequent one. On the other hand, one expects that asymptotically the first term (linear in the weights) gets larger than the second (quadratic in the weights) and therefore to obtain a decrease in the function $\Phi$. This is exactly what happens when choosing weights as done in the Adagrad methods and as in our proposed algorithm: the weights are increasing during the iterations and then asymptotically the linear term becomes dominant.

\begin{remark}
Note that by Lemma \ref{lemma:P1andP3}(c), as $w_k>0$, 
the direction $s^k$ is a common descent direction and thus a descent direction for $\Phi$, i.e.,
 there exists $t_0>0$ such that it holds 
$f_j(x^k+t'\,s^k)<f_j(x^k)$    for all $ j\in[m]$,  
and, as a direct implication, also 
\[\Phi(x^k+t'\,s^k)<\Phi(x^k) \] 
for all $t'\in(0,t_0]$.
\end{remark}

Thanks to Lemma \ref{lemma:philow} we have that a value $\Phi_{low}$ exists such that 
\[
\Phi(x) \geq \Phi_{low},\; \mbox{ for all } x\in \R^n.
\]
We are now ready to state the announced result on the global convergence rate of Algorithm \ref{alg:MOADG}. For that, the average of a sequence of values $\beta^{\ell}\in\R$ is defined as 
\[\av_{\ell = 0,\ldots, k}\beta^{\ell}=\frac{1}{k+1}\sum\limits_{\ell=0}^k\beta^{\ell}.\]
Also, the proof uses the technical results of Lemmas \ref{log_bound} and \ref{Lemma:magic}  which are reported in Appendix~\ref{App:proof}.

\begin{theorem}\label{thm:rate}
Let $\Gamma_0 := \Phi(x^0) - \Phi_{\rm low}.$ 
Suppose that Algorithm \ref{alg:MOADG} is applied to  \eqref{P}.
 Then the following bound holds:
\[ 
\av_{\ell = 0,\ldots, k}\; \omega(x^\ell) = \av_{\ell = 0,\ldots, k} \; \|g_\ell^s\|_2^2 \leq \frac{\theta}{k+1}
\]
with 
  \[\theta = \max\left\{ \varsigma,  \frac \varsigma 2 e^{\frac{2\Gamma_0}{L_{\max}}},  
  \frac{2048 {L_{\max}}^4}{\varsigma} \right\}  \] 
and $\varsigma\in(0,1)$, as selected in Step 1 of  Algorithm \ref{alg:MOADG}.
\end{theorem}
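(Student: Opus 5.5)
The plan is to follow the standard Adagrad-Norm argument: combine the cumulative descent inequality \eqref{eq:lemma21-2} of Lemma~\ref{LemmaPhi} with the lower bound $\Phi\ge\Phi_{\rm low}$ from Lemma~\ref{lemma:philow}, and reduce the claim to a scalar inequality for the accumulated quantity
\[
S_k:=\sum_{\ell=0}^k\|g_\ell^s\|_2^2 .
\]
The first equality in the statement is immediate, since $\omega(x^\ell)=\|g_\ell^s\|_2^2$ by definition of $g_\ell^s$. The remaining claim is equivalent to $S_k\le\theta$. The weight recursion \eqref{weig} with $w_{-1}=\sqrt{\varsigma}$ gives
\[
w_\ell^2=\varsigma+S_\ell ,
\]
so $(w_\ell)$ is nondecreasing in $\ell$.

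\textbf{Step 1 (lower bound on the linear term).} Since $w_\ell\le w_k$ for $\ell\le k$, the first sum in \eqref{eq:lemma21-2} satisfies
\[
\sum_{\ell=0}^k\frac{\|g^s_\ell\|_2^2}{w_\ell}\;\ge\;\frac{S_k}{w_k}=\frac{S_k}{\sqrt{\varsigma+S_k}} .
\]

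\textbf{Step 2 (upper bound on the quadratic term).} The second sum is $\sum_{\ell}a_\ell/(\varsigma+\sum_{i\le \ell}a_i)$ with $a_\ell=\|g_\ell^s\|_2^2$. I expect Lemma~\ref{log_bound} in the appendix to bound exactly this kind of sum by
\[
\log\!\Big(\frac{\varsigma+S_k}{\varsigma}\Big).
\]
The elementary route is $a/(c+a)\le\log((c+a)/c)$, followed by telescoping.

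\textbf{Step 3 (scalar inequality).} Combining Steps 1 and 2 with $\Phi(x^0)-\Phi(x^{k+1})\le\Gamma_0$ yields
\[
\frac{S_k}{\sqrt{\varsigma+S_k}}\;\le\;\Gamma_0+\frac{L_{\max}}{2}\log\Big(1+\frac{S_k}{\varsigma}\Big).
\]
Everything else concerns this one-variable inequality. If $S_k\le\varsigma$, we are done, since $\theta\ge\varsigma$. Otherwise $\varsigma+S_k\le 2S_k$, so the left-hand side is at least $\sqrt{S_k/2}$, and $\log(1+S_k/\varsigma)\le\log(2S_k/\varsigma)$. We then split according to which term on the right-hand side dominates.

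\emph{Case A: the logarithmic term dominates.} Here $\sqrt{S_k/2}\le L_{\max}\log(2S_k/\varsigma)$. Using a bound of the type $\log y\le c\,y^{1/4}$ turns this into a polynomial inequality in $S_k$, whose solution should give $S_k\le 2048L_{\max}^4/\varsigma$.

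\emph{Case B: the constant $\Gamma_0$ enters decisively.} Rewriting as
\[
\log(2S_k/\varsigma)\;\ge\;\frac{2}{L_{\max}}\Big(\sqrt{S_k/2}-\Gamma_0\Big)
\]
and exponentiating should produce the term $\tfrac{\varsigma}{2}e^{2\Gamma_0/L_{\max}}$.

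I expect the technical Lemma~\ref{Lemma:magic} to package precisely this case analysis: an inequality of the form $u\le a+b\log u$ implies an explicit bound on $u$. I would invoke it with $u=S_k/\varsigma$ or $u=2S_k/\varsigma$.

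\textbf{Main obstacle.} The hard step is the last one: solving the transcendental inequality with constants that exactly match $\theta$. In particular, the mixed regime where both $\Gamma_0$ and the logarithm matter must be absorbed into the two terms $\tfrac{\varsigma}{2}e^{2\Gamma_0/L_{\max}}$ and $2048L_{\max}^4/\varsigma$. My first attempt would use the dichotomy $\Gamma_0\le\tfrac{L_{\max}}{2}\log(2S_k/\varsigma)$ versus the reverse, with the thresholds tuned by trial to recover the stated constants. Once $S_k\le\theta$ is established, dividing by $k+1$ gives the stated bound.
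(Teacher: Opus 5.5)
Your plan is correct and is essentially the paper's proof. Steps 1--3 reproduce \eqref{eq:help1}--\eqref{eq:ineq0}, and the dichotomy $\Gamma_0\le\frac{L_{\max}}{2}\log(2S_k/\varsigma)$ versus its reverse that you name at the end, together with your split at $S_k=\varsigma$, is exactly the paper's assumption \eqref{ass:varsigma}. Its failing branch gives $S_k<\frac{\varsigma}{2}e^{2\Gamma_0/L_{\max}}$ immediately, while the other branch is precisely your Case A. Your ``exponentiate the rewritten inequality'' version of Case B is therefore unnecessary, and as written it would not produce that term.

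You swapped the two appendix lemmas. Lemma~\ref{Lemma:magic} is the sum bound. Lemma~\ref{log_bound} is the bound for $au\le b+c\log u$, which the paper applies with $u=\sqrt{2S_k/\varsigma}$, $a=\sqrt{\varsigma}/2$, $b=0$, $c=2L_{\max}$, so that the left side is linear in $u$. In Case A, your own route via $\log y\le 4y^{1/4}$ (from $\log z\le z-1$ with $z=y^{1/4}$, $y=2S_k/\varsigma$) gives exactly $S_k\le 2048L_{\max}^4/\varsigma$, so no tuning of thresholds is needed.
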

\begin{proof}
Recall that by \eqref{eq:optvrelation} we have $  \omega(x^k)=\|\gk\|_2^2$ and we have further $\Gamma_0=\Phi(x^0)-\Phi_{\rm low}\geq \Phi(x^0)-\Phi(x^{k+1})$ for all $k$.
From \eqref{eq:lemma21-2},  we derive
\begin{equation}\label{eq:help1}
   \sum_{\ell=0}^k \frac{\|g^s_\ell\|_2^2}{w_\ell} \leq \Gamma_0 + \frac{L_{\max}}{2}  \sum_{\ell=0}^k \frac{\|g^s_{\ell}\|_2^2}{w_\ell^2}.
\end{equation}
 Using Lemma \ref{Lemma:magic} with $c_j = \|g_j^s\|_2^2$ and the fact that $w_k = \sqrt{\varsigma + \sum_{j=0}^k \|g_j^s\|_2^2}$ we have
 \[
 \sum_{\ell=0}^k \frac{\|g^s_{\ell}\|_2^2}{w_\ell^2} =  \sum_{\ell=0}^k \frac{\|g^s_{\ell}\|_2^2}{\varsigma + \sum_{j=0}^{\ell} \|g_j^s\|_2^2} \leq \log\left(1 + \frac 1 \varsigma \sum_{\ell=0}^k \|g^s_{\ell}\|_2^2\right),
\]
where $\log$ denotes the natural logarithmic function.
Therefore, including the above inequalities in \eqref{eq:help1} we get
\begin{equation}\label{eq:ineq0}
   \sum_{\ell=0}^k \frac{\|g^s_\ell\|_2^2}{w_\ell} \leq \Gamma_0 + \frac{L_{\max}}{2} \log\left(1 + \frac 1 \varsigma \sum_{\ell=0}^k \|g^s_{\ell}\|_2^2\right).
\end{equation}
Assume now that
\begin{equation}\label{ass:varsigma}
\sum_{\ell=0}^k \|g^s_\ell\|_2^2 \geq \max\left\{\varsigma, \frac \varsigma 2 e^{\frac{2\Gamma_0}{L_{\max}}}\right\},
\end{equation}
which implies  
\begin{equation}\label{eq:ineq1}
    \Gamma_0 \leq \frac{L_{\max}}{2} \log\left( \frac 2 \varsigma \sum_{\ell=0}^k \|g^s_{\ell}\|_2^2\right),
\end{equation}
as well as 
$$ \varsigma +  \sum_{\ell=0}^k \|g^s_{\ell}\|_2^2 \le 2 \sum_{\ell=0}^k \|g^s_{\ell}\|_2^2,$$
which leads to 
\begin{equation}\label{eq:help2}
 \log\left(1 + \frac 1 \varsigma \sum_{\ell=0}^k \|g^s_{\ell}\|_2^2\right)\leq \log\left( \frac 2 \varsigma \sum_{\ell=0}^k \|g^s_{\ell}\|_2^2\right).
\end{equation}
Furthermore, recalling the definition of $w_k$ in \eqref{weig}, we have that 
\begin{equation}\label{eq:ineq2}
w_k  = \sqrt{\varsigma + \sum_{\ell=0}^k \|g^s_{\ell}\|_2^2}\le \sqrt{2 \sum_{\ell=0}^k \|g^s_{\ell}\|_2^2}.
\end{equation}

Then, observing that $w_\ell \le w_k$ for all $\ell\le k$ and 
using inequalities (\ref{eq:ineq1}) and (\ref{eq:ineq2})  in (\ref{eq:ineq0})  using \eqref{eq:help2} we obtain
\[
\frac{\sum_{\ell=0}^k \|g^s_{\ell}\|_2^2}{\sqrt{2\sum_{\ell=0}^k \|g^s_{\ell}\|_2^2}}\leq L_{\max} \log\left( \frac 2 \varsigma \sum_{\ell=0}^k \|g^s_{\ell}\|_2^2\right)\korange{,}
\]
that is 
\begin{equation}\label{eq:sqrt}
    \frac{\sqrt{\varsigma}}{2} \sqrt{ \frac{2}{\varsigma} \sum_{\ell=0}^k \|g^s_{\ell}\|_2^2}
\leq 2 L_{\max} \log\left( \sqrt{\frac 2 \varsigma \sum_{\ell=0}^k \|g^s_{\ell}\|_2^2}\right).
\end{equation}
We can now apply Lemma \ref{log_bound} since (\ref{eq:sqrt}) is identical to (\ref{disu}) with 
\[
    a = \frac{\sqrt{\varsigma}}{2} , \quad b = 0,  \quad c = 2 L_{\max}, \quad u = \sqrt{\frac 2 \varsigma \sum_{\ell=0}^k \|g^s_{\ell}\|_2^2},
\]
and $u\ge 1$ due to  \eqref{ass:varsigma}, and obtain the bound
\[
\sum_{\ell=0}^k \|g^s_{\ell}\|_2^2  \le \frac{\varsigma}{2}  \max \left \{1,   \frac{64^2 {L_{\max}}^4}{\varsigma^2}         \right\}.
\]
Hence taking the average gives that
\[
\av_{\ell \in\{ 0,\ldots, k\}} \|g^s_{\ell}\|_2^2 \leq   \max \left \{\frac{\varsigma}{2},   
\frac{2048 {L_{\max}}^4}{\varsigma}         \right\} \frac{1}{k+1}.
\]
Suppose that \eqref{ass:varsigma} fails, then we would have
\[
\av_{\ell \in\{ 0,\ldots, k\}} \|g^s_{\ell}\|_2^2 \leq   \max\left\{\varsigma, \frac \varsigma 2 e^{\frac{2\Gamma_0}{L_{\max}}}\right\} \frac{1}{k+1}.
\]
\end{proof}

Theorem \ref{thm:rate} proves that the average $\omega(x^k)$ converges to zero.  
Furthermore, as
\[\min_{\ell \in\{ 0,\ldots, k\}} \|g_{\ell}^s\|_2\leq \sqrt{\av_{\ell \in\{ 0,\ldots, k\}} \|g^s_{\ell}\|_2^2}\leq \sqrt{\frac{\theta}{k+1}},\] 
we get that the global convergence rate of the average norm of the common descent direction $g^s_k$ is $\mathcal{O}\left( 1/\sqrt{1+k}\right)$, in accordance with the original Adagrad algorithm.

\section{Numerical results}\label{sec:num}
In this section, we present the numerical tests conducted to evaluate the performance of~\texttt{MO-Adagrad}.
We implemented~\texttt{MO-Adagrad} described in Algorithm \ref{alg:MOADG} in MATLAB, and we set $\varsigma = 10^{-2}$ (see e.g. \cite{gratton2024bounds,prunAdag}). 

For comparison, we implemented a standard descent method in which $-g_k^s$ (obtained solving~\eqref{P3} for $x=x^k$) serves as common descent direction and the step length is determined by using an Armijo-like rule as proposed in~\cite{Fliege00}. 
The method is referred to as~\texttt{MO-Descent} in the forthcoming sections. 
We report the scheme of~\texttt{MO-Descent} in Algorithm~\ref{alg:MODesc} for completeness.
In our implementation of~\texttt{MO-Descent}, we set $\beta = 0.1$.

All tests have been run on an Intel$^{\textnormal{®}}$ Core™ i5-14400F processor running at 2.80GHz under Linux. 

\begin{algorithm}{}
  \caption{\texttt{MO-Descent}}\label{alg:MODesc} 
  	\begin{algorithmic}[1]
    \State Initialization: a starting point $x^0 \in \R^n$ and a constant $\beta\in (0,1)$ are given. Set $k=0$.    
    
  \State Solve \eqref{P3} for $x = x^k$ with optimal solution $\lambda^k$ to obtain $g^s_k\in \R^n$ as in \eqref{eq:defdescentd}
  \State Set $t = 1$.
  \While{$f_j(x^k -t g^s_k) > f_j(x^k) -\beta t \nabla f_j(x^k)^T g^s_k$ \quad  for some $j\in [m]$}
  \State $t = t/2$
  \EndWhile
  \State Set $\alpha^k = t$.
  \State   Define
   $$  \qquad x^{k+1} = x^k - \alpha^k g^s_k,$$ 
    increment $k$ by one and return to Step~2.
  \end{algorithmic}
  \end{algorithm}

To solve~\eqref{P3} we adopt \texttt{fmincon} within MATLAB. Note that the variable space of~\eqref{P3} has dimension $m$, meaning that computing $-g_k^s$ through~\eqref{P3} is computationally convenient when dealing with a small number of objective functions. Finally, for both methods, we say that  a Pareto critical point is reached when $\|g_k^s\| \le 10^{-3}$.

In our numerical experience section, we first provide an overview on the performance of ~\texttt{MO-Adagrad} on bi-objective unconstrained problems built from the CUTEst collection from~\cite{gratton2025s2mpj} available in MATLAB form at \url{https://github.com/GrattonToint/S2MPJ}, as well as on standard unconstrained multi-objective problems from the literature (see, e.g., \cite{Thomann19}). 
We also consider noisy problem instances to assess the behavior of \texttt{MO-Adagrad} under different noise levels.
Then, we report the results of preliminary numerical experiments on the use of \texttt{MO-Adagrad} for training multi-task problems. Indeed, we considered two synthetic datasets: one represents a 4-classes classification task and a binary classification task, the other one consists of 2
binary classification tasks, both for a set of points in $\R^2$, see, e.g., \cite{Multi-Task}. 

\subsection{Experiments on CUTEst instances.}
In a first set of experiments, we constructed bi-objective unconstrained instances from the single-objective unconstrained problems in the CUTEst collection~\cite{gratton2025s2mpj} by adding a squared $\ell_2$-norm regularizer as a second objective. We excluded all problems with optimal value equal to $0$ in order to obtain proper bi-objective instances. Overall, we built 124 instances; the names of used CUTEst problems  are reported in Table \ref{tab::cutestinst}  along with their dimension that goes up to $4999$\footnote{The problem dimensions are derived from the CUTEst test set available online at \url{https://github.com/GrattonToint/S2MPJ}. Although these dimensions may occasionally differ from those in the official CUTEst repository, we opted to use the 
MATLAB version.}.
All runs were initialized from the standard starting points provided by \cite{gratton2025s2mpj}.
We stopped the algorithms as soon as a Pareto critical point is reached or in case a limit of 100000 number of gradient 
evaluations of the form  $(\nabla f_1(x), \ldots,\nabla f_m(x))$ is reached. 
For a fair comparison, for~\texttt{MO-Descent} we counted the number of function evaluations $f(x)$ divided by the problem dimension $n$ and added this quantity to the number of gradient evaluations. 

We report in Figure~\ref{fig:cutest+2norm} the performance profiles~\cite{dolan2002benchmarking} related to the  number of gradient (and weighted function) evaluations. Clearly, no function evaluations are required by~\texttt{MO-Adagrad}. 
While~\texttt{MO-Descent} requires a smaller number of gradient and function evaluations in case a Pareto critical point is reached,~\texttt{MO-Adagrad} turns to be more robust solving the 89\% of instances (\texttt{MO-Descent} only solves the 77\% of instances).
\begin{figure}[t]
  \centering
  \caption{Performance profiles (log$_{10}$-scale) on the number of gradient (and function) evaluations on instances derived from CUTEst problems by including $\ell_2$-norm regularizer as a second objective.}
  \includegraphics[width=0.8\textwidth]{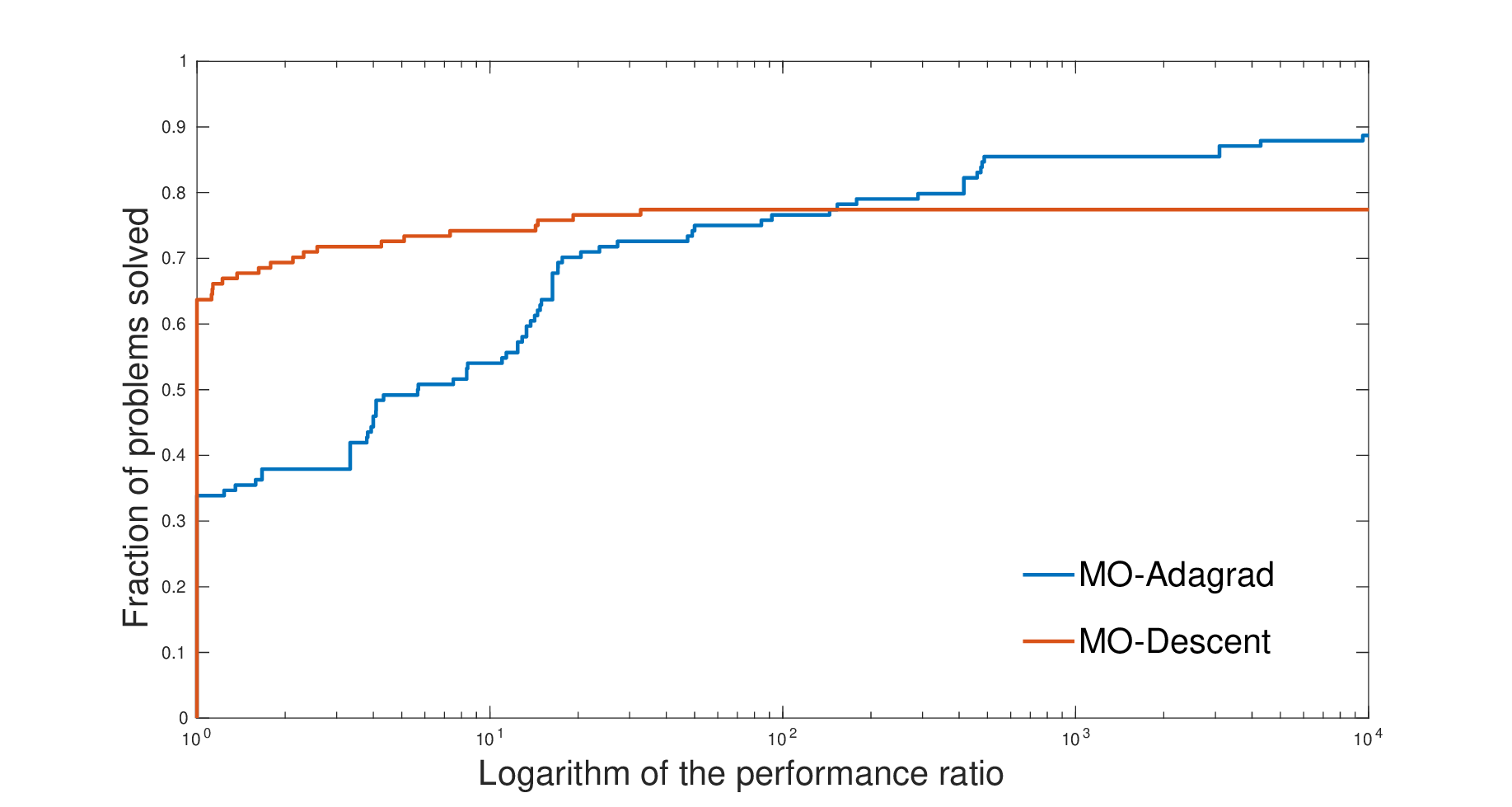}
      \label{fig:cutest+2norm}
\end{figure}

\begin{table}[h]
\centering
\scriptsize
\caption{CUTEst instances considered to build bi-objective problems by including squared $\ell_2$-norm regularizer as a second objective.}
\label{tab::cutestinst}
  \vspace{2mm}
\resizebox{1\textwidth}{!}{
\begin{tabular}{c l|c l|c l|c l}
\hline
{Problem} & $n$ &  {Problem} & $n$ &{Problem} & $n$ &{Problem} & $n$ \\
\hline
ALLINITU	&	4	&	EGGCRATE	&	2	&	LSC1LS	&	3	&	PENALTY2	&	10	\\
ARGLINA	&	200	&	EIGENALS	&	6	&	LSC2LS	&	3	&	POWELLBSLS	&	2	\\
ARGLINB	&	10	&	EIGENBLS	&	6	&	LUKSAN12LS	&	98	&	POWERSUM	&	10	\\
ARWHEAD	&	10	&	ELATVIDU	&	2	&	LUKSAN17LS	&	100	&	QING	&	5	\\
BARD	&	3	&	ENGVAL1	&	10	&	LUKSAN22LS	&	100	&	QUARTC	&	10	\\
BDQRTIC	&	10	&	ENGVAL2	&	3	&	MANCINO	&	10	&	ROSENBR	&	2	\\
BEALE	&	2	&	ERRINROS	&	10	&	MARATOSB	&	2	&	ROSSIMP1	&	2	\\
BIGGS6	&	6	&	ERRINRSM	&	10	&	METHANL8LS	&	31	&	ROSSIMP2	&	2	\\
BOXBODLS	&	2	&	EXPFIT	&	2	&	MEXHAT	&	2	&	ROSSIMP3	&	2	\\
BRKMCC	&	2	&	EXTROSNB	&	10	&	MEYER3	&	3	&	ROSZMAN1LS	&	4	\\
BROWNAL	&	10	&	FLETBV3M	&	10	&	MGH17LS	&	5	&	S308	&	2	\\
BROWNDEN	&	4	&	FLETCBV2	&	10	&	MGH17SLS	&	5	&	SENSORS	&	5	\\
BROYDN3DLS	&	5	&	FLETCBV3	&	10	&	MODBEALE	&	10	&	SINQUAD2	&	10	\\
CHNROSNB	&	5	&	FREUROTH	&	4	&	MSQRTALS	&	25	&	SINQUAD	&	10	\\
CHNRSNBM	&	5	&	GAUSSIAN	&	3	&	MSQRTBLS	&	25	&	SPMSRTLS	&	4999	\\
CLUSTERLS	&	2	&	GENROSE	&	10	&	n10FOLDTRLS	&	4	&	STREG	&	4	\\
COOLHANSLS	&	9	&	GROWTHLS	&	3	&	NONCVXU2	&	10	&	THURBERLS	&	7	\\
CRAGGLVY	&	10	&	GULF	&	3	&	NONCVXUN	&	10	&	TOINTGOR	&	50	\\
CUBE	&	2	&	HATFLDD	&	3	&	NONDIA	&	10	&	TOINTGSS	&	10	\\
CURLY10	&	15	&	HATFLDE	&	3	&	OSBORNEA	&	5	&	TOINTQOR	&	50	\\
CYCLIC3LS	&	12	&	HIMMELBCLS	&	2	&	OSBORNEB	&	11	&	TQUARTIC	&	10	\\
CYCLOOCFLS	&	20	&	HIMMELBH	&	2	&	OSCIGRAD	&	10	&	TRIDIA	&	5	\\
DANIWOODLS	&	2	&	HYDCAR6LS	&	29	&	PALMER1C	&	8	&	TRIGON2	&	10	\\
DENSCHNB	&	2	&	INDEFM	&	10	&	PALMER1D	&	7	&	VARDIM	&	10	\\
DENSCHNC	&	2	&	JENSMP	&	2	&	PALMER2C	&	8	&	WATSON	&	12	\\
DENSCHND	&	3	&	JUDGE	&	2	&	PALMER3C	&	8	&	WAYSEA1	&	2	\\
DENSCHNF	&	2	&	KSSLS	&	10	&	PALMER4C	&	8	&	WOODS	&	4000	\\
DEVGLA2	&	5	&	LANCZOS1LS	&	6	&	PALMER5C	&	6	&	YATP2CLS	&	35	\\
DIXON3DQ	&	10	&	LANCZOS2LS	&	6	&	PALMER6C	&	8	&	YATP2LS	&	35	\\
EDENSCH	&	10	&	LANCZOS3LS	&	6	&	PALMER7C	&	8	&	YFITU	&	3	\\
EG2	&	10	&	LIARWHD	&	10	&	PALMER8C	&	8	&	ZANGWIL2	&	2	\\
\hline
\end{tabular}}
\end{table}

\subsection{Experiments on noisy problems}
In a second set of experiments, we selected the unconstrained problems used in \cite{Thomann19} as benchmark multi-objective optimization instances. Moreover, we
 constructed $10$ bi-objective unconstrained instances combining single-objective unconstrained problems in the CUTEst collection~\cite{gratton2025s2mpj} sharing the same dimension.
 In this case, we considered three different starting points: the starting point for the first original problem, the starting point for the second original problem and the average of the two.
For the benchmark problems, we used $10$ randomly generated starting points.
The results in the forthcoming table are reported on average.

As before, we stopped the algorithms as soon as a Pareto critical point is reached or in case the limit of 100000 number of gradient (and function in case of~\texttt{MO-Descent}) evaluations is reached. 
As the objective-function-free methods are mainly known for their ability of being robust to noise, we evaluated our approach under different levels of noise added both to the objectives and to the gradients. Specifically, we considered a noise level of $\rho \in \{5\%, 15\%, 25\%\}$  by adding a relative Gaussian noise with unit variance (see, e.g., \cite{gratton2024bounds}).
We perturbed each element function as follows
$$\tilde f_j(x) = f_j(x)(1+ \rho \mathcal{N}(0,1)), \qquad  j\in [m]$$
 where $\mathcal{N}(0,1)$ is the standard normal distribution. Gradients are perturbed componentwise, analogously. 

We report in Table~\ref{tab::mix-cutest+bench} the results obtained in terms of gradient and function evaluations.  
Consistent with our first set of experiments, in the noise-free setting, \texttt{MO-Adagrad} exhibits fewer failures than \texttt{MO-Descent}, the latter of which fails on 20 instances.  
When noise is added to both the gradients and the objective functions, \texttt{MO-Descent} shows improved performance, whereas \texttt{MO-Adagrad} remains largely unchanged\footnote{We also ran experiments with $\rho=10^{-3}$ and $\rho=10^{-4}$. While the results are close to those of the noise-free setting, a small amount of noise allowed \texttt{MO-Descent} to recover from some failures, consistent with our observations for $\rho = 0.05$.}. In particular, \texttt{MO-Descent} considerably reduces its number of failures  for all the considered values of $\rho$  (both \texttt{MO-Descent}  and \texttt{MO-Adagrad} solve the 5 benchmark problems for the 10 starting points and the different noise levels within the limit of gradient and function evaluations).

\begin{table}[h!]
\centering
\scriptsize
\caption{Results on the instances obtained combining CUTEst problems and on the benchmark instances. For the CUTEst problems, three starting points are considered: the starting point for the first original problem, the starting point for the second original problem and the average of the starting points of the two original problems. For the benchmark instances, we use $10$ different randomly generated starting points. All the results are averaged in gradient (and function) evaluations over the number of instances solved (reported in brackets).}
\label{tab::mix-cutest+bench}
  \vspace{2mm}
\resizebox{1\textwidth}{!}{
\begin{tabular}{c|c|c|c|c|c}
\hline
{Problem} & $n$ & \multicolumn{2}{c|}{no noise} & \multicolumn{2}{c}{$\rho = 5\%$} \\
&  & \texttt{MO-Descent} & \texttt{MO-Adagrad} & \texttt{MO-Descent} & \texttt{MO-Adagrad}  \\
\hline
BROWNDEN-ALLINITU & 	 4 	&	 -  	&	{\bf 13(1)}	&	5484.3(1)	&	 {\bf 14(1)} \\
BROWNAL-ARWHEAD  &	 10 	&	4.9(2)	&	{\bf 25.7(3)}	&	3237.5(3)	&	{\bf 866.7(3)} \\
BROWNAL-VARDIM  &	 10 	&	{\bf 657.4(3)}	&	11830.7(3)	&	{\bf 165.9(3)}	&	10253.3(3) \\
ARWHEAD-VARDIM & 	 10 	&	78.0(1)	&	{\bf 6.7(3)}	&	1039.4(2)	&	{\bf 126.3(3)} \\
ZANGWIL2-ROSENBR & 	 2 	&	-	&	{\bf 64.0(3)}	&	2729.2(3)	&	{\bf 219.0(3)} \\
ZANGWIL2-CUBE & 	 2 	&	56.0(1)	&	{\bf 94.0(3)}	&	700.2(3)	&	{\bf 99.0(3)} \\
ZANGWIL2-WAYSEA1 & 	 2 	&	-	&	{\bf 217.7(3)}	&	1881.3(3)	&	{\bf 173.7(3)} \\
ROSENBR-WAYSEA1 & 	 2 	&	-	&	{\bf 4887.3(3)}	&	22722.7(3)	&	{\bf 5156.0(3)} \\
ROSENBR-CUBE & 	 2 	&	1215.5(3)	&	{\bf 581(3)}	&	1107.7(3)	&	{\bf 772.0(3)} \\
WAYSEA1-CUBE & 	 2 	&	-	&	{\bf 719(2)}	&	9594.3(2)	&	{\bf 334.5(2)} \\
Lovison 3 & 2 &  {\bf 24.2(10)} & 1685.8(10) & 1995.6(10) & {\bf 1115.2(10)}\\ 
Lovison 4 & 2 &  {\bf 15.6(10)} & 3422.4(10) & {\bf 270.2(10)} & 3356.2(10)\\ 
MOP1 & 2 & {\bf 3.0(10)} & 2536.7(10) & {\bf 17.1(10)} & 2517.1(10)\\ 
T1 & 2 &  {\bf 31.8(10)} & 3844.7(10) & {\bf 837.0(10)} & 1944.8(10)\\ 
T2 & 2 &  12.2(10) & {\bf 7.1(10)} & 58.1(10) & {\bf 7.2(10)}\\ 
\hline
\hline
{Problem} & $n$ & \multicolumn{2}{c|}{$\rho = 15\%$} & \multicolumn{2}{c}{$\rho = 25\%$} \\
&  & \texttt{MO-Descent} & \texttt{MO-Adagrad} & \texttt{MO-Descent} & \texttt{MO-Adagrad}  \\
\hline
 BROWNDEN-ALLINITU & 	 4 	&	{\bf 23526.9(2)}	&	16(1)	&	8693.75(1)	&	 {\bf 19(1)}\\
 BROWNAL-ARWHEAD & 	 10 	&	680.5(3)	&	{\bf 135.3(3)}	&	259.2(3)	&	{\bf 57.7(3)} \\
 BROWNAL-VARDIM & 	 10 	&	{\bf 746.7(3)}	&	7576.3(3)	&	{\bf 338.9(3)}	&	4570.3(3) \\
 ARWHEAD-VARDIM & 	 10 	&	663.3(3)	&	{\bf 48.7(3)}	&	421.2(3)	&	{\bf 25.3(3)} \\
 ZANGWIL2-ROSENBR & 	 2 	&	6036.8(3)	&	{\bf 362.3(3)}	&	4361.3(3)	&	{\bf 214.0(3)} \\
 ZANGWIL2-CUBE & 	 2 	&	1126.0(3)	&	{\bf 381.7(3)}	&	2302.2(3)	&	{\bf 432.7(3)} \\
 ZANGWIL2-WAYSEA1 & 	 2 	&	737.7(3)	&	{\bf 93.7(3)}	&	559.0(3)	&	{\bf 79.0(3)} \\
 ROSENBR-WAYSEA1 & 	 2 	&	53622.8(3)	&	{\bf 6964.0(3)}	&	37562.0(3)	&	{\bf 5334.0(3)} \\
 ROSENBR-CUBE & 	 2 	&	1377.0(3)	&	{\bf 696.3(3)}	&	{\bf 861.3(3)}	&	2365.0(3) \\
 WAYSEA1-CUBE & 	 2 	&	4939.0(2)	&	{\bf 123(2)}	&	7984.0(2)	&	{\bf 679.5(2)} \\
 Lovison 3 & 2 & 9847.5(10) & {\bf 1265.9(10)} & 23562.0(10) & {\bf 1506.5(10)}\\ 
Lovison 4 & 2 & \textbf{789.0(10)} & 3190.6(10) & {\bf 1778.3(10)} & 2875.0(10) \\
 MOP1 & 2 & {\bf 4.5(10)} & 2401.6(10) & {\bf 4.5(10)} & 2214.6(10)\\ 
 T1 & 2 & 2627.6(10) & {\bf 2464.1(10)} & {\bf 1443.5(10)} & 2093.6(10)\\ 
 T2 & 2 & 65.1(10) & {\bf 7.2(10)}  & 68.2(10) & {\bf 6.6(10)}\\ 
\hline
\end{tabular}}
\end{table}

At first glance, this suggests that the presence of noise helps \texttt{MO-Descent} in identifying Pareto critical points. To investigate this further, we examined the locations of the Pareto critical points in the criterion space.
We report in Table~\ref{tab::mix-cutest-fo} the distance between the Pareto critical points detected in the presence of noise and those obtained in the noise-free setting (on average on the runs sharing the same starting points).\footnote{Note that, for ARWHEAD-VARDIM, \texttt{MO-Descent} with $\rho = 5\%$ identified Pareto critical points from starting points different from the one that led to a solution in the noise-free setting;  consequently, the distance could not be computed.}
The results show that the points found by \texttt{MO-Descent} exhibit larger distances compared to those obtained by \texttt{MO-Adagrad} for the majority of the instances.
While this indicates that noise can drive exploration into different regions of the criterion space, it may also represent an undesirable behavior: \texttt{MO-Descent} may fail to reliably converge to the same solutions as in the noiseless setting, whereas \texttt{MO-Adagrad} demonstrates more robust convergence, highlighting its reliability in noisy scenarios.

\begin{table}
\centering
\scriptsize
\caption{Distance from the Pareto critical points detected with no noise.}
\label{tab::mix-cutest-fo}
  \vspace{2mm}
\resizebox{1\textwidth}{!}{
\begin{tabular}{c|c c| c c|c c}
\hline
Problem & \texttt{MO-Descent} & \texttt{MO-Adagrad} & \texttt{MO-Descent} & \texttt{MO-Adagrad} & \texttt{MO-Descent} & \texttt{MO-Adagrad} \\
&  \multicolumn{2}{c|}{$\rho = 5\%$} & \multicolumn{2}{c|}{$\rho = 15\%$} & \multicolumn{2}{c}{$\rho = 25\%$}\\
\hline
BROWNAL-ARWHEAD 	& 4.57 & {\bf 2.28} & 5.13 & {\bf 2.48} & 7.25 & {\bf 2.64}\\ 
BROWNAL-VARDIM 	& {\bf 0.01} & 0.02 & {\bf 0.01} & 0.04 & {\bf 0.01} & 0.06\\ 
ARWHEAD-VARDIM & - & {\bf 564.17} & 10340.18 & {\bf 225.24} & 10139.62 & {\bf 1343.01}\\
ZANGWIL2-CUBE & 176.16 & {\bf 27.15} & {\bf 1.71} & 81691.15 & {\bf 10.25} & 90851.88 \\
ROSENBR-CUBE & 0.28 & {\bf 0.05} & {\bf $<$ 1e-2} & 0.13 & {\bf $<$ 1e-2} & 0.33\\ 
Lovison 3 & 100.37 & {\bf 32.58}& 142.87 & {\bf 113.09}& 412.79 & {\bf 210.07}\\ 
Lovison 4 & 15.41 & {\bf 1.45}& 27.22 & {\bf 4.09}& 25.21 & {\bf 8.81}\\ 
MOP1 & 0.25 & {\bf 0.11}& 3.77 & {\bf 0.46}& 3.91 & {\bf 0.76}\\ 
T1 & 37.56 & {\bf 8.03}& 59.61 & {\bf 22.38}& 82.10 & {\bf 35.01}\\ 
T2 & {\bf 0.05} & 0.06& {\bf 0.16} & 0.17& 0.28 & 0.28\\ 
\hline
\end{tabular}}
\end{table}

\subsection{Experiments on multi-task geometric classification}
We consider a data set $\mathcal{D} = \{(\mathbf{x}_i, y_i^{(1)}, y_i^{(2)})\}_{i=1}^N$ consisting of $N = 10000$ points uniformly sampled in the square $[-2,2]^2 \subseteq \R^2$. In the first example, called {\em Quadrants-Circle}, points are labeled with respect to two different criteria: 
Task 1 to belong to one of the four quadrants of $\R^2$ (4-classes classification task); Task 2 to be inside or outside the unit  circle centered in the origin (binary classification task).

Since Task 2 (Circle) is not linearly separable in the original 2D space, the input vector $\mathbf{x}_i$ is obtained via a feature engineering \cite{bishop2006pattern} that includes quadratic terms:
\begin{equation*}
    \mathbf{x}_i = [1, x_{i,1}, x_{i,2}, x_{i,1}^2, x_{i,2}^2]^\top  \in \mathbb{R}^5
\end{equation*}
where the first element $1$ represents the bias term,  $x_{i,1}$ and $x_{i,2}$ are the components of the point $x_i \in [-2,2]^2$, for $i = 1,\dots,N$.
The labels are defined as follows:
\begin{itemize}
    \item $y_i^{(1)} \in \{1, 2, 3, 4\}$: Quadrant Label.
    \item $y_i^{(2)} \in \{0, 1\}$: Circle Label ($1$ if inside, $0$ if outside).
\end{itemize}
As prediction models, we considered the Softmax Regression for Task 1 and the  Logistic Regression for Task 2. Finally, we employed categorical Cross-Entropy 
and the binary Cross-Entropy as loss functions for the two tasks.

Overall, the final bi-objective problem is:
\begin{equation*}
    \min_{\mathbf{W}_1, \mathbf{w}_2}\ (J_1(\mathbf{W}_1), J_2(\mathbf{w}_2))
\end{equation*}
where $\mathbf{W}_1 \in \mathbb{R}^{5\times 4}$ and $\mathbf{w}_2 \in \mathbb{R}^{5}$ denote the weights of Task 1 and Task 2, respectively. The 
(non-competing) objectives are
\begin{equation*}
    J_1(\mathbf{W}_1) = -\frac{1}{N} \sum_{i=1}^{N} \sum_{k=1}^{4} \mathds{1}(y_i^{(1)} = k) \log \left( \frac{\exp(\mathbf{w}_{1,k}^\top \mathbf{x}_i)}{\sum_{j=1}^4 \exp(\mathbf{w}_{1,j}^\top \mathbf{x}_i)} \right),
\end{equation*}
and 
\begin{equation}\label{loss_bin}
    J_2(\mathbf{w}_2) = -\frac{1}{N} \sum_{i=1}^{N} \left[ y_i^{(2)} \log(\hat{y}_i^{(2)}) + (1 - y_i^{(2)}) \log(1 - \hat{y}_i^{(2)}) \right],
\end{equation}
where $\mathds{1}$  denotes the indicator function, $\hat{y}_i^{(2)} = \sigma(\mathbf{w}_2^\top \mathbf{x}_i)$ with $\sigma$ the sigmoid function and $\mathbf{w}_{1,j}$ denotes the $j$th column of $\mathbf{W}_1$, for $j =1,\dots,4$.

In the second example, named {\em Diagonals-Circle}, Task 1 is a non-linear binary classification task based on the XOR logic of the Cartesian quadrants. The model must distinguish between points in the main diagonal (Quadrants 1 and 3) and the anti-diagonal (Quadrants 2 and 4). This requires the inclusion of an interaction feature ($x \cdot y$) to achieve linear separability. In this case, we used the objective function $J_2$ in (\ref{loss_bin}) for both objectives.

In both examples, the data set is randomly split into a training set  and a  test set made up of 8000 and 2000 samples, respectively. A representation of the predicted labels on the testing set for the two multi-task problems  obtained with \texttt{MO-Adagrad} is reported in Figures \ref{fig:MTquad} and \ref{fig:MTbin}.

\begin{figure}[h!]
  \centering
  \caption{{\em Quadrants-Circle}: Task 1 on the left, Task 2 on the right. Different colors denote different labels for the points. }
  \includegraphics[width=\textwidth]{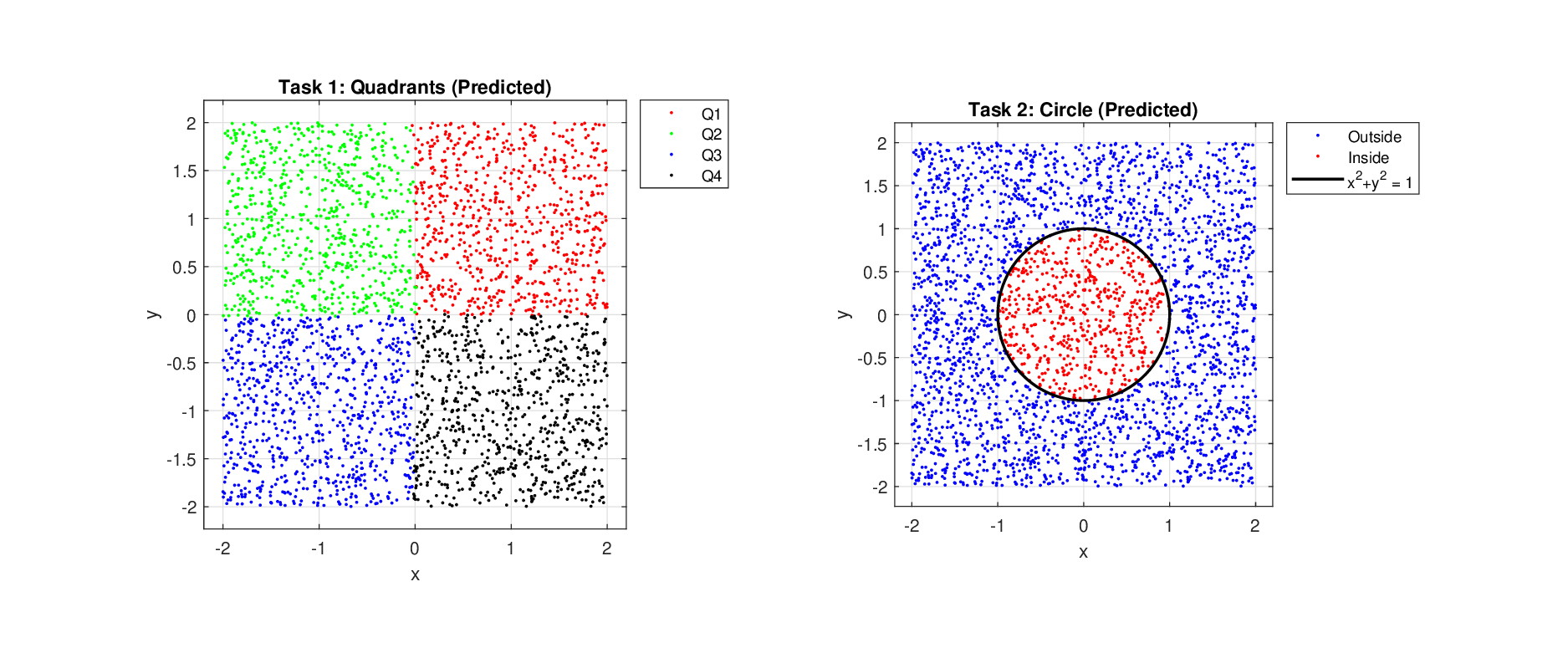}
      \label{fig:MTquad}
\end{figure}

\begin{figure}[h!]
  \centering
  \caption{{\em Diagonals-Circle}: Task 1 on the left, Task 2 on the right. Different colors denote different labels for the points. }
  \includegraphics[width=\textwidth]{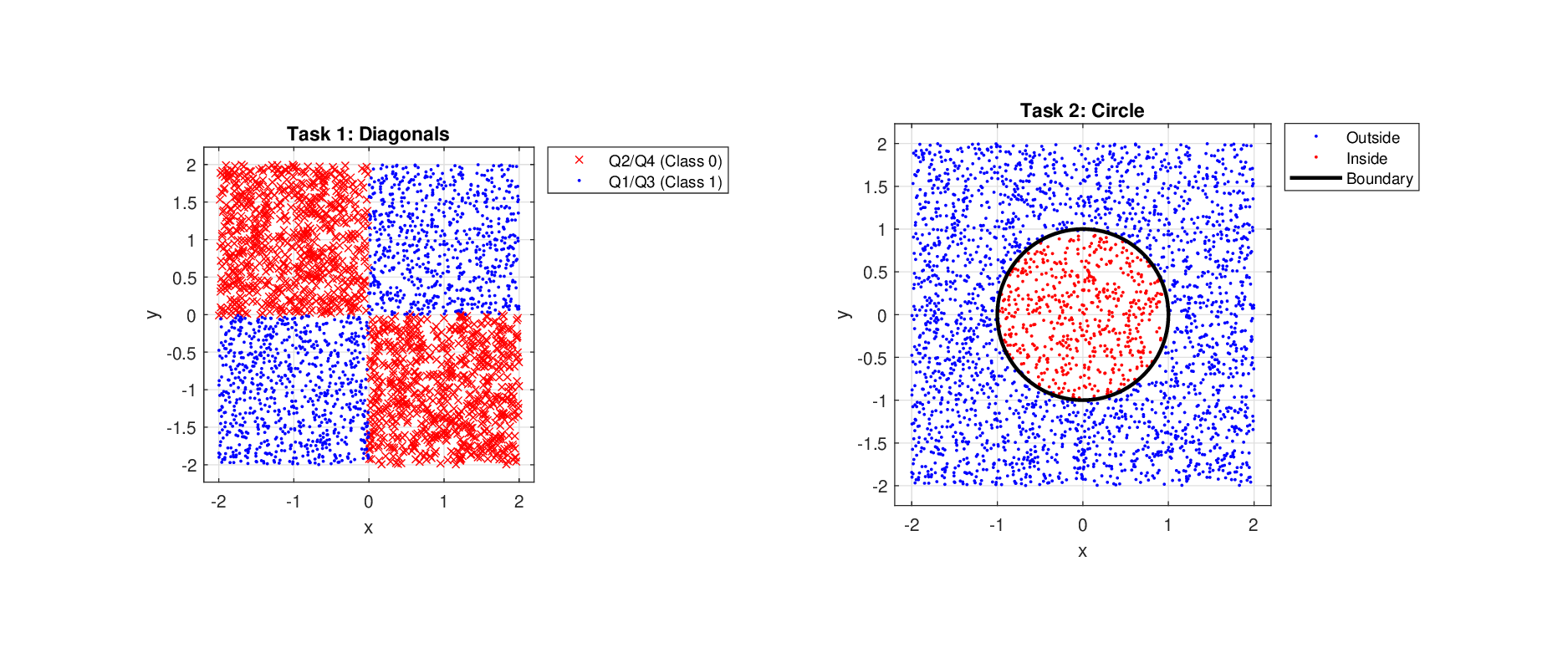}
      \label{fig:MTbin}
\end{figure}

In Table \ref{tab::MT} we report the results obtained with \texttt{MO-Descent} and \texttt{MO-Adagrad} on the two multi-task examples. In particular, we let the solvers run for 1000 iterations, and we retrieved the number of gradient/function evaluations ($g_{eval}/f_{eval}$) and elapsed time in seconds (time) employed in the training phase, to get the maximum accuracy $\mathcal{A}$ on the testing set. The accuracy $\mathcal{A}$ is measured considering the minimum of the testing accuracies in the two tasks. We observe that the two solvers get comparable
accuracies (slightly in favor of \texttt{MO-Descent}) but  \texttt{MO-Adagrad} is faster by a factor 3 than \texttt{MO-Descent}. This is due to the objective-function-free nature of \texttt{MO-Adagrad} that makes it particularly suitable when, as in this multi-task framework, evaluating the objective function is roughly as expensive as evaluating the gradient.

\begin{table}[h!]
\centering
\scriptsize
\caption{Results on the multi-task geometric classification instances.}
\label{tab::MT}
  \vspace{2mm}
\resizebox{1\textwidth}{!}{
\begin{tabular}{c |cccc |ccc }
\hline
\hline
\multirow{2}{*}{Problem} & \multicolumn{4}{c|}{\texttt{MO-Descent}} & \multicolumn{3}{c}{\texttt{MO-Adagrad}}\\
 &   $g_{eval}$ & $f_{eval}$ & time & $\mathcal{A}$ &   $g_{eval}$ & time &$\mathcal{A}$  \\
\hline
{\em Quadrants - Circle}  &  306 & 3957 & 24.9 & 99.8\% &  388  & {\bf 6.9} & 99.6\%  \\ 
{\em Diagonals - Circle}  &  577 & 6324 & 27.3 & 99.9\% &  970  & {\bf 10.0} & 99.1\%  \\ 
 \hline
 \hline
\end{tabular}}
\end{table}

\section{Conclusions}\label{sec:concl}
In this work, we propose a novel Adagrad-like algorithm, {\tt MO-Adagrad}, for unconstrained multi-objective optimization.
Our method does not need line search based techniques or dominance-based criteria for accepting new points, while still converging to Pareto critical points. Through extensive numerical tests on a broad set of bi-objective instances, we show the robustness of \texttt{MO-Adagrad} in comparison with a line search based first-order method. 
The method looks robust and promising to deal with noisy problems, as known for its single-objective counterpart.
Further preliminary experiments have been devoted to training multi-task problems, suggesting the potential ability of \texttt{MO-Adagrad} to deal with multi-task learning problems.

Future work will be devoted to extend \texttt{MO-Adagrad} to compute sets of nondominated Pareto points, as opposed to the single Pareto point currently produced. Recent advances in multi-objective optimization have seen growing interest in local algorithms that generate sequences of sets, with the aim of approximating the Pareto set of a multi-objective problem~\cite{cocchi2020convergence,custodio2011direct,lapucci2023improved,lapucci2023limited,
lapucci2026effective,liuzzi2016derivative}.
However, avoiding the use of dominance-based acceptance criteria presents a significant challenge when generating a sequence of sets, rather than a sequence of individual points. Consequently, future work will focus on developing alternative approaches, beyond multistart procedures, to tackle this complexity.
 
{\footnotesize
\subsection*{\footnotesize Acknowledgment}
The authors would like to thank the anonymous referee for their detailed and constructive comments that allowed us to improve the quality of our paper.

\subsection*{\footnotesize Ethics approval and consent to participate}
Not applicable.
\subsection*{\footnotesize Consent for publication}
Not applicable.
\subsection*{\footnotesize Funding}
{\footnotesize
The work of M.P. was partially supported by INdAM-GNCS under the
project CUP E53C24001950001.  The research of M.P.  was  partially granted by the Italian Ministry of University and Research (MUR) through the PRIN 2022 ``MOLE: Manifold constrained Optimization and LEarning'',  code: 2022ZK5ME7 MUR D.D. financing decree n. 20428 of November 6th, 2024 (CUP B53C24006410006). A research visit of G.E. at University of Florence with M.P. and M.d.S. was funded by the Erasmus+ Programme of the European Union.
}
\subsection*{\footnotesize Availability of data and materials}
The data presented in this manuscript are reproducible through the implementation publicly available on \\ \url{https://github.com/mariannadesantis/MO-ADAGRAD}.

\subsection*{\footnotesize Competing interests}
The authors declare that they have no competing interests.
\subsection*{\footnotesize Authors' contributions}
All authors contributed equally to the writing of this article. All authors reviewed the manuscript.}

\appendix
\section{Technical lemmas} \label{App:proof}

We report in this section two technical results. The proof of the first one can be found in~\cite{bellavia2026} and is reported for the sake of completeness. The proof of Lemma~\ref{Lemma:magic} can be found in~\cite[Lemma 3.1]{gratton2024complexity}.
In the following, with $\log$ we denote the natural logarithmic function. 

\begin{lemma}\label{log_bound}
Suppose that 
\begin{equation}\label{disu}
     a u \leq b + c \log(u),
\end{equation}
for some $a,c>0$, a scalar $b \in\R$ and $u\ge1$. Then
\begin{equation*}
    u \leq \max \left\{e^{b/c}, \frac{4c^2}{a^2} \right \}.
\end{equation*}
\end{lemma}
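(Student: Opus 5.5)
We need to prove: if $au \le b + c\log u$ with $a,c>0$ and $u\ge 1$, then $u \le \max\{e^{b/c}, 4c^2/a^2\}$.

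The plan is a case split according to which of the two terms on the right-hand side of \eqref{disu} dominates, combined with the elementary bound $\log u \le \sqrt{u}$ for $u \ge 1$. The case split is on the sign and size of $b$ relative to $c\log u$. The key observation is that $au>0$, so at least one of $b$ and $c\log u$ carries at least half of the total: $au \le 2\max\{b,\, c\log u\}$.

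\emph{Case 1: $c\log u \le b$.} Then $\log u \le b/c$, so $u \le e^{b/c}$ directly. We do not even need \eqref{disu} here beyond the case hypothesis.

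\emph{Case 2: $c\log u > b$.} Then \eqref{disu} gives $au < 2c\log u$. Here we use $\log u \le \sqrt{u}$ for $u\ge1$. To check it, set $\phi(u)=\sqrt u-\log u$. Then $\phi(1)=1>0$ and $\phi'(u)=\frac{1}{2\sqrt u}-\frac1u$, which vanishes only at $u=4$. So $\min\phi=\phi(4)=2-\log 4>0$. Hence $au < 2c\sqrt u$, so $\sqrt u < 2c/a$, i.e.\ $u < 4c^2/a^2$.

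Combining the two cases yields $u \le \max\{e^{b/c}, 4c^2/a^2\}$.

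The step needing the most care is fixing the constant in Case 2. A sharper inequality such as $\log u \le \sqrt u/e\cdot 2$ would give a better constant than needed. The crude $\log u\le\sqrt u$ produces exactly the factor $4c^2/a^2$ claimed, so I would use that. No other difficulty is expected, and note that $b$ may be negative or zero (as in the application with $b=0$), which the case split handles without change.
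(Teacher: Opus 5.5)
Your proof is correct and follows the paper's argument: the same case split on whether $b \le c\log u$, with the second case reduced to $au \le 2c\log u$ and then to a square-root bound on the logarithm. The only difference is the elementary inequality. You use $\log u \le \sqrt{u}$, verified directly by calculus. The paper cites $\log u \le \log(1+u) \le u/\sqrt{1+u}$, which gives the marginally sharper $u \le 4c^2/a^2 - 1$.
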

\begin{proof}
Suppose that $u \geq e^{b/c}$. Then $b\le c \log(u)$ and using \eqref{disu} we get (see \cite[eq. (14)]{logbound}) 
$$au\le2c\log(u)\le 2c\log(1+u)\le \frac{2cu}{\sqrt{1+u}}.$$ 
Hence $a\sqrt{1+u} \le 2c$, which is to say that $u\le (2c/a)^2-1$, yielding the desired bound.
\end{proof}

\begin{lemma}\label{Lemma:magic}
Let $\{c_k\}_{k\ge 0}$ be a non-negative sequence and  $\xi>0$.
Then
\[
\sum_{j=0}^k  \frac{c_j}{\xi+ {\sum_{\ell=0}^j c_{\ell}}}
\le  \log\left(\frac{\xi + \sum_{j=0}^k c_j}{\xi} \right).
\]
\end{lemma}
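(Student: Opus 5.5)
The plan is to compare the sum with an integral of $1/t$. Set $S_j := \xi + \sum_{\ell=0}^j c_\ell$ for $j\ge 0$ and $S_{-1} := \xi$. Then $S_j - S_{j-1} = c_j \ge 0$, so $(S_j)$ is nondecreasing and $S_j\ge \xi>0$ for all $j$. Each summand can be rewritten as
\[
\frac{c_j}{\xi+\sum_{\ell=0}^j c_\ell} = \frac{S_j - S_{j-1}}{S_j}.
\]

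The key step is to bound each summand by a logarithmic increment. Since $1/t \ge 1/S_j$ for every $t\in[S_{j-1},S_j]$, we have
\[
\frac{S_j - S_{j-1}}{S_j} \le \int_{S_{j-1}}^{S_j} \frac{dt}{t} = \log(S_j) - \log(S_{j-1}).
\]
Equivalently, this is the elementary inequality $1 - 1/y \le \log y$ applied with $y = S_j/S_{j-1}\ge 1$. If $c_j=0$, both sides vanish, so no case distinction is needed.

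Summing these bounds over $j=0,\dots,k$ telescopes to
\[
\log(S_k) - \log(S_{-1}) = \log\left(\frac{\xi+\sum_{j=0}^k c_j}{\xi}\right),
\]
which is the claimed bound.

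No step poses a real obstacle. The only point needing care is the index bookkeeping: the denominator of the $j$-th summand includes $c_j$ itself, and defining $S_{-1}=\xi$ is what makes the telescoping start correctly. The positivity $\xi>0$ guarantees that all logarithms and quotients are well defined.
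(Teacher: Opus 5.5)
Your proof is correct and takes essentially the same route as the source: the paper does not prove this lemma itself but defers to \cite[Lemma 3.1]{gratton2024complexity}, where the argument is, as far as I recall, the same comparison of each summand with $\int_{S_{j-1}}^{S_j} dt/t$ (using that $1/t$ is decreasing) followed by telescoping. Your handling of the index shift via $S_{-1}=\xi$ and of the case $c_j=0$ is sound.
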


\end{document}